\DeclareMathOperator{\depth}{depth}
\DeclareMathOperator{\embdim}{embdim}
\DeclareMathOperator{\Hom}{Hom}
\DeclareMathOperator{\id}{id}
\DeclareMathOperator{\im}{im}
\DeclareMathOperator{\projdim}{projdim}
\DeclareMathOperator{\rank}{rank}
\DeclareMathOperator{\tr}{tr}
\renewcommand{\ge}{\geqslant}
\renewcommand{\le}{\leqslant}
\newcommand{\sk}{\mbox{$\mathsf{k}$}}
\newcommand{\m}{\mbox{$\mathfrak m$}}	
\newcommand{\ch}{\mbox{\rm char}}
\newcommand{\soc}{\mbox{\rm soc}}      
\newcommand{\edim}{\mbox{\rm embdim}}
\theoremstyle{plain}
\newtheorem{theorem}{Theorem}[section]
\newtheorem{lemma}[theorem]{Lemma}
\newtheorem{construction}[theorem]{Construction}
\newtheorem{proposition}[theorem]{Proposition}
\newtheorem{corollary}[theorem]{Corollary}
\newtheorem{claim}[theorem]{Claim}
\theoremstyle{definition}
\newtheorem{remark}[theorem]{Remark}
\newenvironment{customtheorem}[1]
{\innercustomtheorem}
{\endinnercustomtheorem}
\newenvironment{customcorollary}[1]
{\innercustomcorollary}
{\endinnercustomcorollary}
\theoremstyle{definition}
\newtheorem{definition}[theorem]{Definition}
\newtheorem{example}[theorem]{Example}
\newtheorem{notation}[theorem]{Notation}
\newtheorem{question}[theorem]{Question}
\numberwithin{equation}{section}
\title[
Extremal Behavior of ideals of minors]{
Extremal Behavior of ideals of minors
}
\author{Trung Chau}
\address{Chennai Mathematical Institute, Siruseri, Tamilnadu~603103. India}
\email{chauchitrung1996@gmail.com}
\author{Michael DeBellevue}
\address{Mathematics Department, Syracuse University, Syracuse, NY 13244, USA}
\email{michael.debellevue@gmail.com}
\author{Souvik Dey}
\address{Department of Mathematical Sciences, 850 West Dickson Street, University of Arkansas, Fayetteville, Arkansas 72701, USA, https://orcid.org/0000-0001-8265-3301}
\email{souvikd@uark.edu}
\author{Omkar Javadekar}
\address{Chennai Mathematical Institute, Siruseri, Tamilnadu~603103. India}
\email{omkarj@cmi.ac.in}
\author{Ganapathy Krishnamoorthy}
\address{Department of Mathematics, I.I.T. Madras, Chennai~600036, Tamilnadu. India}
\email{ganapathy.math@gmail.com}
\newcommand{\p}{\mathfrak{p}}
\newcommand{\q}{\mathfrak{q}}
\newcommand{\del}{\partial} 
\newcommand{\syz}{\Omega}
\theoremstyle{plain}
\keywords{minors,  periodicity, stretched Gorenstein rings, fiber products, minimal free resolutions, Eisenbud–Shamash construction, deformation, trace ideal}
\subjclass[2020]{13D02; 13H10; 13D10}
\begin{document}

\begin{abstract}
Ideals of minors arising from minimal free resolutions—equivalently, the Fitting ideals of syzygy modules—are natural invariants of interest in commutative algebra and algebraic geometry. A surprising observation in recent years by Brown–Dao–Sridhar is that for many nice classes of rings, like complete intersection rings and Golod rings, ideals of minors tend to become eventually periodic. In this article, we establish similar periodicity phenomena for further classes of rings, namely fiber products and Artinian stretched Gorenstein rings. In fact, we show that when the embedding dimension is at least 3, with a mild characteristic assumption, the ideals of minors stabilize to powers of the maximal ideal, exhibiting extremal behavior. We also study the transfer of periodicity between rings. Specifically, we prove that for any local ring $(R,\m)$ and a super-regular element $x\in \m$, if the ideals of minors of $R/(x)$-module $M$ have extremal behavior, then so do the ideals of minors of $M$ over $R$.
\end{abstract}
\maketitle

\section{Introduction}\label{sec:Introduction}

Let $R$ be a local (or graded) ring with maximal (homogeneous) ideal $\m$, and let $M$ be a finitely generated (graded) $R$-module.
Homological invariants of $M$ encode information about the complexity of the relations of $M$, and can be computed from its minimal free resolution.
This has been a central topic in commutative algebra and algebraic geometry.
A classical result due to Auslander, Buchsbaum, and Serre states that if $R$ is a regular local ring, then every finitely generated module has a finite minimal free resolution.
Even over such rings, constructing the minimal free resolution of a cyclic module has become an ever active topic among researchers (e.g., \cite{BPS98}).
If $R$ is not regular, minimal free resolutions are typically infinite. Thus, the quest to understand minimal free resolutions becomes much more difficult.
We refer to \cite{Avramov1998,MP15} for great surveys on the topic of infinite free resolutions. 

A common pattern in the study of homological invariants of modules is that for large homological degrees, the behavior of these invariants depends more upon the ring than upon the module.
A paradigmatic example is Eisenbud's result that minimal resolutions over hypersurface rings are eventually 2-periodic \cite{Ei80}.
This has inspired researchers to look for periodic behavior in infinite minimal free resolutions.
Eisenbud characterized the differentials in resolutions over hypersurface rings as corresponding to matrix factorizations.
In more general settings, obtaining descriptions which are similarly explicit seems far-fetched still, as it would require, as the minimum, the full understanding of all syzygies of a given module.
However, their information can be partially captured by their ideals of minors, also known as \emph{Fitting ideals} \cite{Fitting1936}, which we shall recall. 
Let $(F_n,\partial_n)$, where $n$ ranges among non-negative integers, be the minimal free resolution of $M$ over $R$.
For given integers $r$ and $n$, let $I^R_{n,r}(M)$ denote the ideal of $R$ generated by all $r\times r$ minors of a matrix that represents the differential $\partial_n\colon F_n\to F_{n-1}$.
As this definition does not depend on the bases of $F_n$ and $F_{n-1}$ upon which the matrix is constructed, the ideals of minors are invariants of the module $M$.
Applications of ideals of minors include the celebrated Buchsbaum--Eisenbud acyclicity criterion \cite{BE73} which states that whether a bounded complex of free $R$-modules is a free resolution depends entirely on the ranks of the free modules and the corresponding ideals of minors.
For more applications on ideals of minors, we refer to \cite{EG94,EH05,Wang94} for a non-exhaustive list. 

The periodicity of the differentials of resolutions over hypersurfaces implies that the ideals of minors are periodic over such rings.
Consequently, a natural question in the more general setting is whether ideals of minors are asymptotically constant, or at least periodic.
This was answered in the negative by a well-known family of examples discovered by Gasharov and Peeva \cite{GP90}. 
While the ideals of minors appearing in these examples are not periodic, their successive sum is still constant; the following weaker formulation by Dao and Eisenbud is still open in general:

\begin{question}[\protect{\cite[Question 1.1]{BDS23}}]\label{q:periodicity}
Given a positive integer $r$ and a finitely generated module $M$ over a local ring $R$, must there be a number $n(M)$ such that the sum of the ideals of $r \times r$ minors in $n(M)$ consecutive differential matrices of the minimal resolution of $M$ eventually becomes
constant?
If so, is there an integer only depending on $R$ that bounds all $n(M)$ from above?
\end{question}

The above question was answered in the affirmative by Brown, Dao, and Sridhar in the case of complete intersection and Golod rings \cite{BDS23} by using structure results for the resolutions over such rings.
Since the structure of infinite resolutions is only known in particular cases, answering \Cref{q:periodicity} in its full generality has proven elusive.

In this work we first answer \Cref{q:periodicity} in the affirmative for fiber product rings and artinian stretched Gorenstein rings.
Fiber products are a well-studied class of rings, both because they provide a means of generating new rings from old that carries over certain properties of the old rings, but also because fiber product rings typically exhibit homologically extremal behaviors. For instance, it is known from \cite[Theorem A]{quasidec} that the direct sum of the third, fourth, and fifth syzygies of any module of infinite projective dimension over a fiber product ring admits the maximal ideal as a direct summand. Similarly, stretched rings also have nice homological properties. For example, it was proved that stretched Cohen--Macaulay rings have rational Poincaré series \cite{Sa80}, and the Auslander--Reiten conjecture holds for stretched Cohen--Macaulay rings \cite{Gu20}. 

We remark that if $R$ is a noetherian local ring with $\edim(R)\le 2$, then in view of \cite[5.1, Proposition 5.3.4]{Avramov1998}, $R$ is either a complete intersection or a Golod ring. Thus, over such rings, given any finitely generated $R$-module $M$, we have $I_{n,r}^R(M) = I_{n+2, r}^R(M)$ for $n \gg 0$ by \cite[Theorems 1.2 and 1.3]{BDS23}. Therefore in our first main result below, we will only consider the case $\embdim(R)\geq 3$.

\begin{customtheorem}{A}[\protect{\Cref{t:FiberProduct}, \Cref{thm:main-theorem-stretched-Gorenstein}}]\label{introthm:periodicity}
    Let $(R,\m,\sk)$ be a noetherian local ring with $\embdim(R)\geq 3$.
    If $R$ is a fiber product ring or an artinian stretched Gorenstein ring, with $\ch(\sk)\neq 2$ in the latter case, then for any positive integer $r$ and any $R$-module $M$ where $\projdim_R M=\infty$, we have $I_{n,r}^R(M)=\m^r$ for $n\gg 0$.
\end{customtheorem}
In the case of fiber products, we obtain a bound on $n$, independent of the module $M$, after which the ideal of minors $I_{n,r}^R(M)$  stabilizes  (see \Cref{t:FiberProduct}). In contrast, over artinian stretched Gorenstein rings, we show that no such bound independent of $M$ exists (see \Cref{exam:no-uniform-bound-stretched-gorenstein}). 

Combining \Cref{introthm:periodicity} and \cite[Theorems 1.2 and 1.3]{BDS23}, we obtain the following.

\begin{customcorollary}{B}
    Let $(R,\m,\sk)$ be a noetherian local ring.
    If $R$ is a fiber product ring or an artinian stretched Gorenstein ring, with $\ch(\sk)\neq 2$ in the latter case, then for any positive $r$ and any $R$-module $M$ where $\projdim_R M=\infty$, we have $I_{n,r}^R(M)=I_{n+2,r}^R(M)$ for $n\gg 0$.
\end{customcorollary}

We can also connect the extremal behavior of ideals of minors seen in \Cref{introthm:periodicity} to the extremal behavior of trace ideals of high enough syzygies of modules. There has been extensive recent interest around trace ideals in connection with understanding centers of endomorphism rings (\cite{lindo}), rings close to being Gorenstein (\cite{nearly}), Arf rings (\cite{arf}),  module closure operations (\cite{closue}), Berger's conjecture (\cite{berger}), to name a few. As a consequence of \Cref{trace} it follows that all high enough syzygies of every module of infinite projective dimension over local rings as described in \Cref{introthm:periodicity} have trace equal to the maximal ideal.

Intuitively, when $R$ is a standard-graded ring, $I_{n,r}^R(M)=\m^r$ for $n\gg 0$ requires a sufficiently large quantity of linear entries in the matrices representing the differentials $\partial_n$ of the minimal free resolution of $M$ for large enough $n$.
In fact, our proofs rely on finding such entries, rather than describing the entire minimal free resolution.
Linearity of resolutions is a more general topic which has received extensive study. For instance, Green and Lazersfeld's $N_p$ property concerns linearity of all entries of $\del_n$ for $0\leq n\leq p$ \cite{Farkas2017}, and linearity of all entries of every $\del_n$ implies that $M$ is a Koszul module \cite{Green1997}.
A weaker notion, made formal in \cite[Definition 2.1]{DaoEisenbud2023}, is to somehow count the number of distinct linear entries appearing in a matrix.

A result on the periodicity of ideals of minors is known over rings of Burch index at least 2. For more on Burch index, we refer interested readers to \cite{DaoEisenbud2023,DKT20}. Let $(R,\m,\sk)$ be a local ring of depth $0$.
It is shown in \cite[Theorem 0.1]{DaoEisenbud2023} that if $R$ is of Burch index at least 2, then the 7-th syzygy module of any finitely generated non-free module $M$ contains $\sk$ as a direct summand.
Coupling this with \cite[Theorem~2.15]{BDS23}, given that $R$ is not a hypersurface, we have that $I_{n,r}^R(M)=\m^r$ for $n\gg 0$.
Artinian stretched Gorenstein rings are typically not hypersurface rings (see \cite{Sa79}), and consequently must be of  Burch index 0 (see \cite[Example 5.5]{DaoEisenbud2023}). 
As for fiber product rings, if both factors have depth zero the Burch index of the fiber product is the sum of the Burch indices of the factors, which produces many fiber product rings of Burch index less than two.
A particular family of examples where one of the factors does not have depth $0$ is $\sk[x]/(x^t)\times_{\mathsf k} T$ for any $t\geq 3$ and any regular ring $T$; these rings have Burch index $1$.
As \Cref{introthm:periodicity} makes no presumption on the Burch index of the ring, our results complement those known over rings of Burch index at least 2.

Investigation of \Cref{q:periodicity} has focused so far on establishing periodicity results for particular classes of rings.
These results can be extended by investigating how periodicity of ideals transfers along ring homomorphisms.
Deformations are a class of ring homomorphisms that are especially amenable to the study of transfer of properties of rings; recall that we say that a noetherian local ring  $(R,\m)$ is a deformation of $R'$ 
if there is a homomorphism $\varphi\colon R\rightarrow R'$ and $\ker\varphi$ is generated by a regular sequence. A base case is provided when $\ker\varphi$ is generated by a single $R$-regular element $x\in \m$: in this circumstance, for an $R$-module $M$ which is annihilated by $x$, the construction of Eisenbud--Shamash \cite{Ei80} provides a way to obtain the resolution of $M$ over $R'$ from its resolution over $R$. This construction was used in \cite{BDS23} to prove the periodicity of ideals of minors for modules over complete intersections. With the help of a converse of the Eisenbud--Shamash construction due to Bergh, Jorgensen, and Moore in \cite{BerghJorgensenMoore2020},  an $R$-resolution of an $R'$-module $M$ can be obtained from its $R'$-resolution. In the case $x \in \m \setminus \m^2$, the minimality of the resolution also gets transferred. Using this, we obtain the following result: 
\begin{customtheorem}{C}\label{introthm:deformation}
    Let $(R, \m_R)$ be a  noetherian local ring, $r$ be a positive integer, $x\in\m_R\setminus\m_R^2$ be super-regular, and $M$ be a finitely generated module over $R'=R/(x)$.  Suppose for each $1\leq s \leq r$, there exists $\ell_s\in \mathbb N$ such that $I_{n,s}^{R'}(M)=\m_{R'}^s$ for all $n\geq \ell_s$. 
   If there exists $N\in \mathbb N$ such that $\beta_n^{R'}(M)\geq r$ for all $n \geq N$, then $I_{n,r}^R(M)=\m_R^r$ for all $n\geq \max\{\ell_1, \ldots, \ell_r, N\}$.
\end{customtheorem}

\Cref{introthm:periodicity} and \Cref{introthm:deformation} allow us to extend the periodicity of ideals of minors to certain modules over certain quasi-fiber products and stretched Gorenstein rings of arbitrary dimension (see \Cref{cor:QFP} and \Cref{cor:SGR}).

This article is organized as follows. In \Cref{sec:basic-properties}, we record some basic properties of ideals of minors that are useful for the rest of the article. In Sections \ref{sec:Fiber-Products} and \ref{sec:Stretched-Artin}, we prove the periodicity of ideals of minors over fiber products and artinian stretched Gorenstein rings, respectively. Finally, \Cref{sec:Deformation} focuses on the behavior of periodicity of ideals of minors under deformations.

\section*{Acknowledgements}

We would like to thank Sarasij Maitra and Prashanth Sridhar for helpful discussions regarding this topic. Trung Chau would like to thank his advisor Srikanth Iyengar for his constant encouragement during his Ph.D., as well as the inspiring conversations regarding infinite free resolutions. Trung Chau and Omkar Javadekar acknowledge support from the Infosys Foundation. Souvik Dey was partially supported by the
Charles University Research Center program No.UNCE/SCI/022 and a grant GACR 23-05148S from
the Czech Science Foundation. Ganapathy Krishnamoorthy acknowledges the support from the Prime Minister's Research Fellowship (PMRF) scheme for carrying out this research work. All of the work done in this article took place when Souvik Dey was a research scientist at the Department of Algebra of Charles University, Prague, and he is very grateful for the outstanding atmosphere fostered by the department.

\section{Computational Properties of Ideals of Minors}\label{sec:basic-properties}
\begin{definition}
    Let $R$ be a commutative ring and $A$ be a matrix with entries in $R$.
    The \textit{ideal of $r\times r$ minors} of $A$, denoted $I_r^R(A)$ is the ideal generated by all $r\times r$ matrix minors of $A$.
    By convention, $I_r^R(A)$ is zero when $r$ exceeds the number of rows or columns of $A$, and $I_0^R(A)=R$.\\
    Now suppose that $R$ is a noetherian local ring (resp.~a standard graded algebra over a field), so that every finitely generated (resp.~graded) $R$-module $M$ has a well-defined minimal free resolution.
    Let $\del_n^M$ be the $n^{th}$ differential appearing in the minimal free resolution of $M$, represented as a matrix with respect to some bases.
    Then $I_{n,r}^R(M)$ is the ideal generated by all $r\times r$ minors of $\del_n^M$.
\end{definition}

Throughout this section, we let $R$ denote a noetherian local ring (resp.~a standard graded $\mathsf k$-algebra) with the maximal ideal (resp.~the unique homogeneous maximal ideal) $\m$ and the residue field $\mathsf k$. 
We will make use of the following computational properties of ideals of minors.
The proofs of these results involve elementary properties of minors, but we record them below for completeness. 

\begin{proposition}\label{p:MinorsMaxPowers}
    $I_{n,r}^R(M)\subseteq \m^r$.
\end{proposition}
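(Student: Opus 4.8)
The plan is to exploit the minimality of the free resolution $(F_\bullet,\partial_\bullet)$ of $M$. Recall that a free resolution over a local (or graded) ring is \emph{minimal} precisely when every entry of every differential matrix lies in $\m$; equivalently, $\partial_n(F_n)\subseteq \m F_{n-1}$ for all $n\ge 1$. Fix bases of $F_n$ and $F_{n-1}$ and let $A$ be the matrix representing $\partial_n$ with respect to these bases; then every entry $a_{ij}$ of $A$ satisfies $a_{ij}\in\m$.

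Next I would look at an arbitrary $r\times r$ minor of $A$. By the Leibniz expansion of the determinant, such a minor is a $\mathbb Z$-linear combination of products of the form $a_{i_1 j_1}a_{i_2 j_2}\cdots a_{i_r j_r}$, where the $i_k$ are distinct row indices and the $j_k$ are distinct column indices. Each of the $r$ factors lies in $\m$, so each such product lies in $\m^r$, and hence the whole minor lies in $\m^r$. Since $I_{n,r}^R(M)$ is generated by all these $r\times r$ minors, we conclude $I_{n,r}^R(M)\subseteq\m^r$. One should also note the degenerate cases: if $r$ exceeds the number of rows or columns of $A$ then $I_{n,r}^R(M)=0\subseteq\m^r$ by convention, and if $r=0$ the claim $R\subseteq R$ is trivial, so these cause no trouble.

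There is no real obstacle here — the only point requiring care is simply invoking the correct characterization of minimality (entries in $\m$) rather than some other equivalent formulation, and handling the out-of-range values of $r$ via the stated conventions. The argument is entirely formal and uses nothing beyond the definition of the determinant and the fact that $\m^r$ is an ideal closed under $R$-linear combinations.
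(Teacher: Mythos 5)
Your argument is correct and is essentially the paper's own proof: both use minimality of the resolution (all entries of $\partial_n$ lie in $\m$) together with the Leibniz expansion of the determinant to place every $r\times r$ minor in $\m^r$. The extra remarks on the degenerate values of $r$ are fine but not needed.
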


\begin{proof}
    The formula for the determinant of an $r\times r$ matrix is a sum of products of $r$ terms.
    Taking a minimal resolution for $M$, each term of each product can be taken to be in $\m$.
\end{proof}

\begin{lemma}\label{l:SubMatrixMinors}
    Let $A$ and $B$ be matrices with entries in $R$ such that, up to permutation of the rows and columns of $B$,  $A\otimes\id_{\ell\times \ell}$ is a submatrix of $B$.
    Then for every composition $r_1+\dots+r_\ell=r$ of a positive integer $r$, we have $I^R_{r_1}(A)I^R_{r_2}(A)\cdots I^R_{r_\ell}(A)\subseteq I^R_r(B)$.
\end{lemma}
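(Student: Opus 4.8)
The plan is to reduce the asserted containment of ideals to a statement about individual minors, and then to realize the relevant product of minors as a single $r\times r$ minor of $B$ via a block-diagonal determinant computation.

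First I would record two elementary reductions. Since both the operator $I_r^R(-)$ and the formation of submatrices are unaffected by permuting rows and columns, and since every $r\times r$ minor of a submatrix of $B$ is in particular an $r\times r$ minor of $B$ (so $I_r^R(C)\subseteq I_r^R(D)$ whenever $C$ is a submatrix of $D$), it suffices to prove $I^R_{r_1}(A)\cdots I^R_{r_\ell}(A)\subseteq I_r^R(A\otimes\id_{\ell\times\ell})$. Next, I would observe that $A\otimes\id_{\ell\times\ell}$ coincides, after one permutation of its rows and one permutation of its columns, with the block-diagonal matrix $A^{\oplus\ell}$ having $\ell$ copies of $A$ along the diagonal and zeros elsewhere; by the same permutation-invariance it is then enough to show $I^R_{r_1}(A)\cdots I^R_{r_\ell}(A)\subseteq I_r^R(A^{\oplus\ell})$.

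Then I would argue on generators. A generator of the left-hand ideal has the form $m_1\cdots m_\ell$, where each $m_i$ is an $r_i\times r_i$ minor of $A$, say $m_i=\det A[S_i\mid T_i]$ for subsets $S_i,T_i$ of the rows and columns of $A$ with $|S_i|=|T_i|=r_i$ (and when $r_i=0$ we take $m_i=1$, consistent with $I_0^R(A)=R$). In the $i$-th diagonal block of $A^{\oplus\ell}$ select the rows indexed by $S_i$ and the columns indexed by $T_i$; since the $\ell$ diagonal blocks occupy pairwise disjoint sets of rows and of columns, this produces a well-defined $r\times r$ submatrix of $A^{\oplus\ell}$ which is itself block-diagonal with diagonal blocks $A[S_1\mid T_1],\dots,A[S_\ell\mid T_\ell]$. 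Its determinant is therefore $\prod_{i=1}^\ell\det A[S_i\mid T_i]=m_1\cdots m_\ell$, so $m_1\cdots m_\ell\in I_r^R(A^{\oplus\ell})$, and since $I_r^R(A^{\oplus\ell})$ is an ideal the inclusion follows.

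The only genuinely fiddly point — and the main, rather mild, obstacle — is verifying cleanly that $A\otimes\id_{\ell\times\ell}$ and $A^{\oplus\ell}$ differ only by a row permutation and a column permutation; this is the ``perfect shuffle'' identification $A\otimes\id_{\ell\times\ell}\cong\id_{\ell\times\ell}\otimes A$, checked by matching the row index $(i,k)$ of $A\otimes\id_{\ell\times\ell}$ with the row index $(k,i)$ of $\id_{\ell\times\ell}\otimes A$ and doing the same for columns. Everything else is routine bookkeeping with the block-diagonal determinant formula.
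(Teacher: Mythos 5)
Your proof is correct, but it takes a genuinely different (and more elementary) route than the paper. The paper works with the module $M$ spanned by the columns of $A$ and invokes Bourbaki's exterior-algebra machinery: it uses the isomorphism $\bigwedge^r(M^\ell)\cong\bigl(\bigl(\bigwedge M\bigr)^{\otimes\ell}\bigr)_r$ and the fact that $r\times r$ minors are the minimal generators of $\bigwedge^r$ of the column module, so that a degree-$r$ generator on the right decomposes as $g_1\otimes\dots\otimes g_\ell$ with $g_i$ of degree $r_i$, which corresponds to a product of $r_i\times r_i$ minors of $A$ lying inside $I_r^R(A\otimes\id_{\ell\times\ell})\subseteq I_r^R(B)$. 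You instead bypass the exterior algebra entirely: after the perfect-shuffle identification of $A\otimes\id_{\ell\times\ell}$ with the block-diagonal matrix $A^{\oplus\ell}$ (legitimate, since ideals of minors are permutation-invariant), you realize each generator $m_1\cdots m_\ell$ of the product ideal directly as the determinant of an $r\times r$ block-diagonal submatrix obtained by picking rows $S_i$ and columns $T_i$ inside the $i$-th diagonal block, with the $r_i=0$ case handled by the convention $I_0^R(A)=R$. Your argument buys self-containedness and makes completely explicit \emph{which} $r\times r$ minor of $B$ witnesses the containment, whereas the paper's approach buys brevity and a conceptual link between minors and exterior powers at the cost of relying on external references and leaving the final identification of generators somewhat implicit. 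Both proofs are valid; yours would serve equally well in the paper.
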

\begin{proof}
    Since ideals of minors are invariant under permutations of rows and columns, we assume that $A\otimes\id_{\ell\times \ell}$ is the $s\times t$ submatrix of $B$ formed by the first $s$ rows and $t$ columns of $B$:

    \[B=
    \left[\begin{array}{ c | c }
    A\otimes\id_{\ell\times \ell} & X \\
    \hline
    Y & Z
  \end{array}\right].
  \]
  The $r\times r$ minors of $B$ whose row indices are less than or equal to $s$ and column indices are less than or equal to $t$ are precisely the $r\times r$ minors of $A\otimes\id_{\ell\times \ell}$, and so $I^R_{r}(A\otimes \id_{\ell\times \ell})\subseteq I^R_r(B)$.
  It therefore suffices to show that $I^R_{r_1}(A)\cdots I^R_{r_\ell}(A)\subseteq I^R_r(A\otimes \id_{\ell\times \ell})$.
  Suppose $A$ is a $p\times q$ matrix.
  If $r_i>p$ or $r_i>q$, then $I^R_{r_i}(A)=0$, and the required inclusion is trivial.
  Therefore, we may assume that each $r_i\leq \min\{p,q\}$.
  Up to a permutation of rows and columns, $A\otimes\id_{\ell\times \ell}$ has the block-diagonal form
  $$A \otimes \id_{\ell \times \ell} =
  \begin{pmatrix}
  A    & 0      & \cdots & 0 \\
  0    & A      & \ddots & \vdots \\
  \vdots & \ddots & \ddots & 0 \\
  0    & \cdots & 0      & A
  \end{pmatrix},$$
  where the matrix $A$ appears $\ell$ times on the diagonal.
  To show $I^R_{r_1}(A)\cdots I^R_{r_\ell}(A)\subseteq I^R_r(A\otimes \id_{\ell\times \ell})$, it is enough to show that if $M_1, \ldots, M_{\ell}$ are minors of $A$ with each $M_i$ of size $r_i$, then the product $M_1 M_2 \cdots M_{\ell} \in I^R_r(A\otimes \id_{\ell\times \ell})$.
  Let $M_i$ be the minor of $A$ formed by rows $\{\alpha_{i1}, \ldots, \alpha_{ir_i}\}$ and columns $\{\gamma_{i1}, \ldots, \gamma_{ir_i}\}$ of $A$.
  Then the product $M_1 M_2 \cdots M_{\ell}$ is the minor of $A \otimes \id_{\ell \times \ell}$ formed by the $r$ rows $\bigcup\limits_{i=1}^{\ell}\{ (i-1)p+\alpha_{ij} \mid 1 \leq j \leq r_i\}$ and the $r$ columns $\bigcup\limits_{i=1}^{\ell}\{ (i-1)q+\gamma_{ij} \mid 1 \leq j \leq r_i\}$.
  Hence $M_1 M_2 \cdots M_{\ell}$ is an $r\times r$ minor of $A\otimes\id_{\ell\times \ell}$, so it lies in $I^R_r(A\otimes \id_{\ell\times \ell})$, as required.
\end{proof}

\begin{proposition}\label{p:SyzSumMinors}
        If $N$ is a direct summand of $\syz^R_{n}(M)$, then for each $m\geq 1$, $I_{m,r}^R(N)\subseteq I_{{n+m},r}^R(M)$. 
\end{proposition}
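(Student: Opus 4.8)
The plan is to reduce Proposition~\ref{p:SyzSumMinors} to Lemma~\ref{l:SubMatrixMinors} by exhibiting the differential of a minimal resolution of $N$ as a direct summand (in the matrix sense) of a differential of the minimal resolution of $M$. The key observation is that if $N$ is a direct summand of $\syz_R^n(M)$, say $\syz_R^n(M) \cong N \oplus N'$, then a minimal free resolution of $\syz_R^n(M)$ can be taken to be the direct sum of minimal free resolutions of $N$ and of $N'$. In turn, a minimal free resolution of $M$ is obtained by splicing a minimal free resolution of $\syz_R^n(M)$ onto the truncation $F_n \to F_{n-1} \to \cdots \to F_0$ of the minimal free resolution of $M$, shifting homological degrees by $n$. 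Hence for each $m \geq 1$, the differential $\partial_{n+m}^M$ of the minimal resolution of $M$ is, up to choice of bases, the differential $\partial_m^{N \oplus N'}$, which is block-diagonal with $\partial_m^N$ as one of the blocks.

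**Carrying out the reduction.**

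First I would record the standard fact that minimal free resolutions are unique up to isomorphism of complexes, so that we are free to build the minimal resolution of $M$ in the way described above. Next, writing $G_\bullet \to N$ and $G'_\bullet \to N'$ for minimal free resolutions, the complex $G_\bullet \oplus G'_\bullet$ resolves $N \oplus N' \cong \syz_R^n(M)$ minimally, and its $m$-th differential is the block matrix
\[
\partial_m^{N \oplus N'} = \begin{bmatrix} \partial_m^N & 0 \\ 0 & \partial_m^{N'} \end{bmatrix}.
\]
Splicing and reindexing identifies this, up to a choice of bases, with $\partial_{n+m}^M$ for $m \geq 1$. Now apply Lemma~\ref{l:SubMatrixMinors} with $\ell = 1$: taking $A = \partial_m^N$ and $B = \partial_{n+m}^M$, the matrix $A = A \otimes \id_{1\times 1}$ is (after permuting rows and columns to move the block into the upper-left corner) a submatrix of $B$, so $I_r^R(A) \subseteq I_r^R(B)$, which is precisely $I_{m,r}^R(N) \subseteq I_{n+m,r}^R(M)$.

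**Anticipated obstacle.**

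The only genuinely delicate point is justifying that the minimal resolution of $M$ can be assembled from the truncated resolution of $M$ together with a minimal resolution of $\syz_R^n(M)$, and that this assembly respects the direct-sum decomposition coming from $N$ being a summand — i.e. that no cancellation or change of basis is forced that would destroy the block structure. This is handled by the uniqueness of minimal free resolutions up to (non-canonical) isomorphism of complexes, together with the fact that the induced isomorphism on each free module is an isomorphism of free modules and thus invertible over $R$; since ideals of minors are invariant under such base changes (as already noted after the definition of $I_{n,r}^R$), the containment of ideals of minors is unaffected. Everything else is bookkeeping with block matrices, and the essential content is entirely contained in Lemma~\ref{l:SubMatrixMinors} in the case $\ell = 1$.
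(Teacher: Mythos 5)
Your proof is correct and follows essentially the same route as the paper: both identify the (shifted) minimal resolution of $N$ as a direct summand of the minimal resolution of $M$, so that $\partial_m^N$ appears as a block of $\partial_{n+m}^M$, and then conclude by the containment of minors of a submatrix (your $\ell=1$ case of Lemma~\ref{l:SubMatrixMinors} is exactly the paper's observation that $\partial_1^N$ is a submatrix of $\partial_{n+1}^M$). The only cosmetic difference is that the paper first reduces to $m=1$ via syzygies while you handle all $m$ at once by splicing, which changes nothing essential.
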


\begin{proof}
    We may assume that $r$ and $m$ are chosen so that $I_{m,r}^R(N)$ is non-zero as otherwise the statement is trivial.

    The $n^{th}$ shift of a resolution of $N$ is a direct summand of the resolution of $M$, so $\syz^R_{m-1}(N)$ is a direct summand of $\syz^R_{n+m-1}(M)$.
    So it suffices to consider the case $m=1$.

    Since $N$ is a direct summand of $\syz^R_{n}(M)$, if  $\del_{n+1}^M$ and $\del_{1}^N$ are the matrices in minimal resolutions of $M$ and $N$, respectively, then $\del_{n+1}^M$ can be written in block form as 
    \[\del_{n+1}^M=\begin{bmatrix}
        \del_1^N & 0 \\
        0        & A
    \end{bmatrix}
    \]
    for some matrix $A$. Since $\del_{1}^N$ is a submatrix of $\del_{n+1}^M$, the result follows.
\end{proof}

 We conclude this section with a relation between trace ideal of modules and ideal of minors. Recall that for an $R$-module $M$, the trace ideal of $M$ is $\tr_R(M):=\sum_{f\in \Hom_R(M,R)} f(M)$, see \cite[Section 2]{lindo} for instance.

\begin{proposition}\label{trace}
    Let $M$ be an $R$-module. Then, $I^R_{n,1}(M)\subseteq \tr_R(\syz_{n}^R (M))$ for all $n\ge 1$. If $(R,\m)$ is local, $\projdim_R(M)=\infty$,  and $I^R_{n,1}(M)=\m$ for some $n\ge \depth (R)+1$, then $\tr_R(\syz_n^R (M))=\m$.   
    
\end{proposition}

\begin{proof} Since $I^R_{n,1}(M)=I^R_{1,1}(\syz_{n-1}^R(M))$ for all $n\ge 1$,  it is enough to prove $I^R_{1,1}(M)\subseteq \tr_R(\syz^R (M))$.  Let $G\xrightarrow{\del} F\to M\to 0$ be a minimal free presentation of $M$. Choose standard bases of $F$ and $G$, and let $\{\pi_i\}$ be the collection of all coordinate projections $\pi_i:F\to R$. Then, $$I^R_{1,1}(M)=\sum_{\pi_i}\im(\pi_i\circ \del)=\sum_{\pi_i}\pi_i|_{\im(\del)}\left(\im(\del)\right)\subseteq \tr_R(\im(\del))=\tr_R(\syz^R (M))$$ where the penultimate inclusion follows since $\pi_i|_{\im(\del)}\in \Hom_R(\im(\del),R)$. Now to prove the last part of the claim, it follows from the inclusion that $\tr_R(\syz_n^R (M))=\m$, or $R$. In view of \cite[Proposition 2.8(iii)]{lindo}, it is then enough to show that $\syz_n^R (M)$ does not have $R$ as a summand for $n\ge \depth (R)+1$. Indeed, if  $\syz^n_R (M)$ did have $R$ as a summand, then $(\syz_n^R (M))/(\mathbf x)(\syz_n^R (M))$ would be faithful over $R/(\mathbf x)$ for every $R$-regular sequence which would contradict \cite[Lemma 5.8, Definition 5.2]{dey2024finitehomologicaldimensionhom} since $n\ge \depth (R)+1$.  
\end{proof}

\section{Periodicity of Ideals of Minors over Fiber Products}\label{sec:Fiber-Products}

\begin{definition}
Let $(S,\p,\mathsf k)$ and $(T,\q,\mathsf k)$ be local rings with a common residue field. 
The \emph{fiber product} of $S$ and $T$ over $\mathsf k$, denoted $S\times_{\mathsf k} T$, is the pullback of the natural projections $S\to \mathsf k$ and $T \to \mathsf k$. It comes equipped with projections to $S$ and $T$, as in the following diagram:
\[
\begin{tikzcd}
  S \times_{\mathsf k} T \arrow[r, "\pi_S"] \arrow[d, "\pi_T"']
    & S \arrow[d] \\
    T \arrow[r]
    & \mathsf k
\end{tikzcd}.
\]
Concretely, $S\times_{\mathsf k} T$ is the subring of the ordinary cartesian product $S\times T$ consisting of the set of all pairs $(s,t)$ such that $\bar{s}=\bar{t}$ in $\mathsf k$.
\end{definition}

Throughout the remainder of this section, $R=S\times_{\mathsf k} T$ will be the fiber product of noetherian local rings $S$ and $T$. 
We also assume that all fiber products are non-trivial, i.e., with $S \not\cong \mathsf k$ and $T \not \cong \mathsf k$. Thus, we have $\embdim(S)\geq 1$ and $\embdim(T) \geq 1$. In particular this forces $\embdim(R)\geq 2$.

While we state and prove the results for fiber products of local rings, they hold true even over fiber products of standard graded algebras over a field.
\begin{remark}
    The fiber product $R$ is also a local ring with residue field $\mathsf k$ and maximal ideal $\p\oplus \q$.
    In particular, there are injective maps $\iota_\p$ from $\p$ and $\iota_\q$ from $\q$ into $R$.
\end{remark}

\begin{remark}\label{r:Lift}
A minimal map $\del\colon S^{m_1}\rightarrow S^{m_2}$ of free $S$-modules (or $T$-modules) can be canonically lifted to a map $\widetilde{\del}\colon R^{m_1}\rightarrow R^{m_2}$.
Representing $\del$ as a matrix, each entry $\del_{i,j}$ is in $\p$ and hence each pair $(\del_{i,j}, 0)$ is in $S\times_{\mathsf k} T$.
Abusing notation, we write $\del_{i,j}$ for $(\del_{i,j},0)$ and will use the same matrix to represent $\widetilde{\del}$.
This can be extended to lifting a minimal complex of $S$-modules to a minimal complex of $R$-modules.
\end{remark}

\begin{example}\label{e:LiftedComplex}
    Let $S=\mathsf k[[x,y]]$, $T=\mathsf k[[z]]$, and $R=S\times_{\mathsf k} T \cong \mathsf k[[x,y,z]]/(xz,yz)$.
    The minimal resolution of $\mathsf k$ over $S$ is $$0\to S\xrightarrow{\begin{bmatrix}-y\\x\end{bmatrix}}S^2\xrightarrow{\begin{bmatrix}x & y\end{bmatrix}}S \to 0.$$
    The lifted complex is $$ 0 \to R\xrightarrow{\begin{bmatrix}-y\\x\end{bmatrix}} R^2\xrightarrow{\begin{bmatrix}x & y\end{bmatrix}}R\to 0.$$
\end{example}

The lifted complex of \Cref{e:LiftedComplex} is no longer acyclic.
The construction of a resolution over $R$ also requires the resolutions of $\mathsf k$ over $S$ and $T$, as described by Moore \cite[Construction~1.7]{moore}.
Our later arguments will rely on the structure of the matrices arising from this resolution, so we provide a self-contained treatment of this resolution in notation better suited to our subsequent analysis.

\begin{construction}[\cite{moore}]\label{c:FiberProductRes}
Let $R=S\times_{\mathsf k} T$ and $M$ be an $S$-module.
Let $E_\bullet$ minimally resolve $\sk$ over $S$, $F_\bullet$ minimally resolve $\sk$ over $T$, and $P_\bullet$ minimally resolve $M$ over $S$.
As described in \Cref{r:Lift}, lift these to complexes of free $R$-modules.
The underlying module structure of the minimal resolution $G_\bullet$ of $M$ over $R$ is then given by 
\[ F_\bullet\otimes_R \mathbb{T}_R\left(E_{\geq 1}\otimes_R F_{\geq 1}\right)\otimes_R P_\bullet \]
where $\mathbb{T}_R\left(E_{\geq 1}\otimes_R F_{\geq 1}\right)$ is the tensor algebra over $R$ with $E_{\geq 1}\otimes_R F_{\geq 1}$ viewed as a free $R$-module graded by homological degrees.
Note that $F_0=R$ and $\mathbb{T}^0_R\left(E_{\geq 1}\otimes_R F_{\geq 1}\right)=R$.
Identifying $r\otimes f$ with $f$ when $r$ is an element of $R$, we may think of the simple tensors of $G_\bullet$ as  starting with any element of $F_\bullet$, $E_{\geq 1}$, or $P_\bullet$; the differential of $G_\bullet$ is given by $\del^F$, $\del^E$, or $\del^P$, respectively, acting on the leftmost tensor factor. In block matrix form, $\del_n^G$ is given by
\[
\begin{bmatrix}
  \del_n^P & \del_1^F\otimes P_{n-1}   &           0                         &             0            & \dots & 0 & 0 \\
  0        &             0             & \del_1^E \otimes F_1\otimes P_{n-2} & \del_2^F \otimes P_{n-2} & \dots & 0 & 0 \\
  \vdots   &          \vdots           &               \vdots                &        \vdots            & \dots & \vdots  & \vdots  \\
  0        &             0             &               0                     & 0  & \dots & \del_1^E\otimes F_{n-1}\otimes P_0 & \del^F_n\otimes P_0 
\end{bmatrix}
\]

In the above matrix, all tensors are over $R$.
Each row has two non-zero blocks and each column has one non-zero block. The tensor of the differentials $\del^F$ and $\del^E$ with free modules means that each block is itself block diagonal, with copies of the matrices for $\del^F$ or $\del^E$ along the diagonal, with the number of copies equal to the rank of the free module.
\end{construction}


We now prove the main theorem of this section.

\begin{theorem}\label{t:FiberProduct}
    Let $R=S \times_{\mathsf k} T$ with $e=\embdim(R)\geq 3$, $\mathfrak m$ the maximal ideal of $R$, and $M$ be a finitely generated $R$-module of infinite projective dimension. Then $I_{n,r}^R(M)=\m^r$ for all $n\geq \dfrac{2r}{\embdim(S) \embdim(T)}+8$.
\end{theorem}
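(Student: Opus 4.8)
The strategy is to reduce to the case $M = \mathsf k$ by way of the syzygy results available for fiber products, then exploit the explicit block structure of Moore's resolution (Construction~\ref{c:FiberProductRes}) to locate enough linear entries. Concretely, since $\projdim_R M = \infty$, the cited result \cite[Theorem A]{quasidec} (referenced in the introduction) gives that $\syz_R^3(M)\oplus\syz_R^4(M)\oplus\syz_R^5(M)$ has $\m$ as a direct summand; hence some $\syz_R^i(M)$ with $i\in\{3,4,5\}$ has $\mathsf k$ as a direct summand. By \Cref{p:SyzSumMinors}, $I_{m,r}^R(\mathsf k)\subseteq I_{i+m,r}^R(M)$ for all $m\ge 1$, so it suffices to prove $I_{m,r}^R(\mathsf k)=\m^r$ for all $m$ past some explicit bound (and then add $i\le 5$, plus a little slack, to get the $+8$ in the statement). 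Combined with \Cref{p:MinorsMaxPowers}, which gives the reverse containment $I_{n,r}^R(M)\subseteq\m^r$ for free, this yields equality.

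The core of the argument is therefore the computation for $\mathsf k$ over $R=S\times_{\mathsf k}T$. Here I would take $M=\mathsf k$ (resolved over $S$ by $P_\bullet$ with $P_0=S$, $P_1=S^{\edim(S)}$, $\del_1^P=\bigl[\begin{smallmatrix}x_1&\cdots&x_{\edim(S)}\end{smallmatrix}\bigr]$ the row of minimal generators of $\p$), and similarly $E_\bullet$, $F_\bullet$ resolve $\mathsf k$ over $S$, $T$. In Construction~\ref{c:FiberProductRes}, the differential $\del_n^G$ has blocks that are tensors of the $\del_1^E$ and $\del_1^F$ (which are rows/columns of the minimal generators of $\p$ and $\q$) with free modules; every entry of such a block is either $0$ or one of the $x_i$ or $y_j$, i.e.\ a minimal generator of $\m$. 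The key observation is that for $n$ large, the blocks $\del_1^E\otimes F_{n-1}\otimes P_0$ (and its companions involving $\del_1^F$) appear with large multiplicity — the number of diagonal copies of $\del_1^E$ grows like $\rank(F_{n-1})\cdot\rank(P_0)$, which is at least a product of Betti numbers of $\mathsf k$ over $T$. Using \Cref{l:SubMatrixMinors}: if $\del_1^E\otimes\id_{\ell\times\ell}$ sits inside $\del_n^G$ as a submatrix (after permutation), then for any composition $r_1+\dots+r_\ell=r$ we get $I_{r_1}^R(\del_1^E)\cdots I_{r_\ell}^R(\del_1^E)\subseteq I_r^R(\del_n^G)$. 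Since $I_1^R(\del_1^E)=\p\cdot R$ and, more usefully, mixing in a $\del_1^F$-block as well, the product $\p^a\q^b$ over all $a+b=r$ generates $\m^r$ (because $\m=\p\oplus\q$, so $\m^r=\sum_{a+b=r}\p^a\q^b$ as $R$-modules — this is where $\edim(R)\ge 3$, equivalently not both factors being one-dimensional regular, is used to guarantee the needed copies are present). So the plan is: (i) show $\del_n^G$ contains, as permuted submatrices, both $\del_1^E\otimes\id_{a\times a}$ and $\del_1^F\otimes\id_{b\times b}$ disjointly for every $a+b=r$, provided $\rank$ of the relevant free modules is $\ge a$ resp. $\ge b$; (ii) bound how large $n$ must be for those ranks to reach $r$, using that Betti numbers of $\mathsf k$ over $S$ and $T$ grow, giving roughly $\rank(F_{n-1})\rank(P_0)\gtrsim \prod \beta$, and tracking this yields the stated $\frac{2r}{\edim(S)\edim(T)}$ term; (iii) conclude $\m^r\subseteq I_{n,r}^R(\mathsf k)$, hence equality.

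For step (ii) I would be careful with the exact counting: the Poincaré series of $\mathsf k$ over a fiber product is known (Dress–Krämer / the formula $\frac{1}{P_R^{\mathsf k}(t)}=\frac{1}{P_S^{\mathsf k}(t)}+\frac{1}{P_T^{\mathsf k}(t)}-1$), and in homological degree $1$ one already has $\edim(S)+\edim(T)$ generators; the multiplicities $\rank(F_{n-1})\cdot\rank(P_0)$ and similar products that appear in the block structure grow at least like $\edim(S)\edim(T)$ per "step" of length $2$, which is the source of the $\tfrac{2r}{\edim(S)\edim(T)}$; the constant $8$ absorbs the $\le 5$ from passing through $\syz^i(M)$, one or two degrees of offset in Construction~\ref{c:FiberProductRes} (the first couple of $\del_n^G$ don't have the full block pattern), and slack in the rank estimate. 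The main obstacle I anticipate is precisely this bookkeeping in step~(i)--(ii): verifying that for $n$ in the claimed range the matrix $\del_n^G$ genuinely contains \emph{disjoint} permuted copies of $\del_1^E\otimes\id_{a\times a}$ and $\del_1^F\otimes\id_{b\times b}$ for \emph{every} composition $a+b=r$ simultaneously — this requires knowing the off-diagonal blocks (the $\del_n^F\otimes P_0$, $\del_n^P$, $\del_2^F\otimes P_{n-2}$ pieces) do not interfere, i.e.\ that the rows and columns hosting a chosen minor can be taken to avoid all other nonzero blocks, which is a structural claim about Moore's construction rather than a computation. Everything else (the residue-field reduction, \Cref{p:MinorsMaxPowers}, and the exterior-algebra input packaged in \Cref{l:SubMatrixMinors}) is then routine.
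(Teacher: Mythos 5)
There is a genuine gap at the very first step, the reduction to $\mathsf k$. From ``$\syz_R^3(M)\oplus\syz_R^4(M)\oplus\syz_R^5(M)$ has $\m$ as a direct summand'' you conclude that some $\syz_R^i(M)$ has $\mathsf k$ as a direct summand; but the cited result produces the \emph{maximal ideal} $\m$, not the residue field, and only as a summand of the direct sum, not of an individual syzygy. Over a fiber product $\m=\p\oplus\q$ is in general nowhere close to $\mathsf k$ (take $R=\sk[[x]]/(x^3)\times_{\sk}\sk[[y]]/(y^3)$: the summands of $\m$ are $R/(x^2,y)$ and $R/(x,y^2)$), and indeed $\mathsf k$ need \emph{not} occur as a syzygy summand over fiber products --- this is exactly why the theorem cannot be deduced from the Burch-index machinery (the paper's example $\sk[x]/(x^t)\times_{\sk}\sk[y,z]$ has Burch index $1$). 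The paper's actual reduction is different: by Dress--Kr\"amer, $\syz_2^R(M)$ decomposes as (an $S$-module)$\oplus$(a $T$-module), and after possibly one more step one gets a nonzero $S$-module $N$ as a summand of $\syz_2^R(M)$ or $\syz_3^R(M)$ (using $\embdim R\ge 3$ to arrange $\embdim S\ge 2$); then \Cref{p:SyzSumMinors} reduces everything to $N$, whose $R$-resolution is given by \Cref{c:FiberProductRes}. If you replace ``$\mathsf k$'' by ``$\m$'' your plan could in principle be repaired (the resolution of $\m$ is the truncated resolution of $\mathsf k$), but you would still need to address that the quoted theorem only gives a summand of the direct sum of three syzygies, and you would still have all the bookkeeping you defer to the constant $8$.

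A second, conceptual miss: in a fiber product $\p\q=0$, so $\m^r=\p^r+\q^r$ and the mixed products $\p^a\q^b$ with $a,b>0$ vanish. Hence the ``main obstacle'' you anticipate --- finding, for every composition $a+b=r$, simultaneous disjoint copies of $\del_1^E\otimes\id_{a\times a}$ and $\del_1^F\otimes\id_{b\times b}$ inside $\del_n^G$ --- is a non-issue; worse, the mixed minors you would build that way are determinants lying in $\p^a\q^b=0$, so that route produces nothing. What is actually needed (and what the paper does) is only one large block of each pure type: for each parity of $n$ one identifies a single block of the form $\del_1^E\otimes\id_{\ell\times\ell}$ and a single block $\del_1^F\otimes\id_{\ell\times\ell}$ with $\ell\ge r$ (the rank count through the factors $E_1\otimes F_1$ of rank $\embdim(S)\embdim(T)$ is where the bound $\frac{2r}{\embdim(S)\embdim(T)}$ comes from), and then \Cref{l:SubMatrixMinors} with composition parts $0$ and $1$ gives $\p^r$ and $\q^r$ inside $I_{n,r}^R$; since each column of the matrix in \Cref{c:FiberProductRes} meets exactly one nonzero block, there is no interference problem to check. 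Your rank-growth heuristic and the use of \Cref{p:MinorsMaxPowers}, \Cref{p:SyzSumMinors}, and \Cref{l:SubMatrixMinors} are in the right spirit, but as written the proof does not go through.
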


\begin{proof}
The inclusion $I_{n,r}^R(M)\subseteq \m^r$ is given by \Cref{p:MinorsMaxPowers}, so it suffices to show that $I_{n,r}^R(M)\supseteq \m^r$.

With $\edim(S)=e_1$ and $\edim(T)=e_2$, since $e=e_1+e_2\geq 3$ we may assume that $e_1\geq 2$.
From \cite{dress/kramer} (see also \cite[4.2]{moore}) we know that $\syz_2^R(M)$ decomposes as the direct sum of an $S$-module and a $T$-module. In the event that the $S$-module in this decomposition is zero\footnote{For instance, $R=\mathsf k[[w,x,y,z]]/(wy,wz,xy,xz,z^2)$, $S=\mathsf k[[w,x]]$, $T=\mathsf k[[y,z]]/(z^2)$, and $M=R/(x+z)$ satisfies $\syz_2^R(M)\cong \mathsf k[[y]]$.}, the next syzygy decomposes as $\syz_3^R(M)\cong\syz_1^T(\syz_2^R(M))\oplus S^{\oplus \beta_2^R(M)}$.
We see in either case that it is possible to obtain a non-zero $S$-module $N$ which is a direct summand of either $\syz_2^R(M)$ or $\syz_3^R(M)$.
Using \Cref{p:SyzSumMinors}, it suffices to show that $I_{n,r}^R(N)\supseteq\m^r$ for $n\geq \frac{2r}{e_1e_2}+5$.

We use the block decomposition of the differential $\del^G_n$ of the resolution $G_\bullet$ of $N$ built in \Cref{c:FiberProductRes}. When $n$ is even, we focus on the two blocks
\[\del_1^F\otimes (E_1\otimes F_1)^{\otimes \frac{n-4}{2}}\otimes E_2\otimes F_1 \otimes P_0
\quad\quad\quad\text{and}\quad\quad\quad
\del_1^E\otimes(F_1\otimes E_1)^{\otimes \frac{n-2}{2}}\otimes F_1\otimes P_0\]
and when $n$ is odd, we focus on the two blocks 
\[\del_1^E\otimes (F_1\otimes E_1)^{\otimes \frac{n-5}{2}}\otimes F_1 \otimes E_2 \otimes F_1 \otimes P_0
\quad\quad\quad\text{and}\quad\quad\quad
\del_1^F\otimes(E_1\otimes F_1)^{\otimes \frac{n-1}{2}}\otimes P_0.\]

Substituting in the bound on $n$, there are at least $\lceil \frac{r}{e_1e_2} \rceil$-many tensor factors of $E_1 \otimes F_1$ (respectively, $F_1\otimes E_1$) in the blocks above (in both even and odd cases).
Each of these factors has rank $e_1e_2$, so each of these blocks is of the form $\del_1^F \otimes R^\ell=\del_1^F\otimes\id_{\ell\times\ell}$ and $\del_1^E \otimes R^\ell=\del_1^E\otimes\id_{\ell\times\ell}$, respectively, where $\ell\geq r$.
In either case, we are done by \Cref{l:SubMatrixMinors} (where we take each part $r_i$ of the composition to be one or zero, and recall that $I^R_0(A)=R$ for any matrix $A$).
\end{proof}

Observe that for a given fiber product $R$, the bound for $n$ obtained in the theorem above is independent of the choice of the module $M$.  

\begin{remark}\label{rem:betti-nos-over-FP}
    With the same hypotheses as \Cref{t:FiberProduct}, from the proof of \Cref{t:FiberProduct}, we see that given any $R$-module $M$ of infinite projective dimension, and $r \in \mathbb N$, we have $\beta_n^R(M) \geq r$ for $n\gg 0$. 
\end{remark}

The next example shows that the hypothesis $\embdim(R)\geq 3$ in \Cref{t:FiberProduct} cannot be removed. 

\begin{example}
    Consider the fiber product ring $R=\sk[[x]] \times_{\mathsf k} \sk[[y]] \cong \dfrac{\sk[[x,y]]}{(xy)}$. Then for the $R$-module $M=R/(x)$, we see that $$I^R_{n,1}(M)= \begin{cases}
         (x) & \text{\ if\ } n \text{\ is\ odd} \\
         (y) & \text{\ if\ } n \text{\ is\ even.}
    \end{cases}
   $$ 
\end{example}

\begin{corollary}
    Let $R$ be a fiber product. Then given any finitely generated $R$-module $M$ with infinite projective dimension and $r \in \mathbb N$, we have $I^R_{n,r}(M)=I^R_{n+2,r}(M)$ for $n\gg 0$.
\end{corollary}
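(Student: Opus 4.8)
The plan is to dispose of this by a short case analysis on $\embdim(R)$, invoking \Cref{t:FiberProduct} in the main case and previously known results in the degenerate case. Recall that throughout this section only non-trivial fiber products are considered, so $\embdim(R)\geq 2$; hence there are exactly two cases to treat.

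If $\embdim(R)\geq 3$, then \Cref{t:FiberProduct} applies directly: since $\projdim_R M=\infty$, there is an integer $n_0$ — explicitly one may take $n_0=\lceil 2r/(\embdim(S)\embdim(T))\rceil+8$ — with $I^R_{n,r}(M)=\m^r$ for all $n\geq n_0$. In particular, for every $n\geq n_0$ one has $I^R_{n,r}(M)=\m^r=I^R_{n+2,r}(M)$, which is stronger than mere $2$-periodicity. If instead $\embdim(R)=2$, then, as recorded in the introduction, $R$ is either a complete intersection or a Golod ring by \cite[5.1, Proposition~5.3.4]{Avramov1998}, and for such rings \cite[Theorems~1.2 and~1.3]{BDS23} give $I^R_{n,r}(M)=I^R_{n+2,r}(M)$ for $n\gg 0$ for every finitely generated $M$ (in this case no hypothesis on projective dimension is even needed). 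Combining the two cases proves the corollary.

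I do not anticipate any genuine obstacle here: the statement is essentially a repackaging of \Cref{t:FiberProduct} together with the $\embdim\leq 2$ discussion from the introduction. The only points requiring a moment's care are invoking the non-triviality convention for fiber products to rule out $\embdim(R)\leq 1$, and — if one wants the sharper statement — observing that in the first case the threshold $n_0$ depends only on $R$ and $r$, not on $M$, matching the uniform bound noted after \Cref{t:FiberProduct}. Neither point is an obstruction to the conclusion as phrased.
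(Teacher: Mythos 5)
Your proposal is correct and follows essentially the same route as the paper: apply \Cref{t:FiberProduct} when $\embdim(R)\geq 3$ (where the stronger conclusion $I^R_{n,r}(M)=\m^r$ holds), and in the remaining low embedding dimension case note that $R$ is a complete intersection or Golod ring and invoke the cited results of Brown--Dao--Sridhar. The extra observations about the non-triviality convention and the module-independent threshold are fine but not needed beyond what the paper records.
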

\begin{proof}
    When $\edim(R)\geq 3$, \Cref{t:FiberProduct} shows that $I^R_{n,r}(M)=\m^r$ for $n\gg 0$. When $\edim(R)\leq 2$, $R$ must be a complete intersection or a Golod ring. Then the result follows from \cite[Theorem 1.2 and 1.3]{BDS23}.
\end{proof}


\section{Periodicity of Ideals of Minors over Artinian Stretched Gorenstein Rings}\label{sec:Stretched-Artin}

\begin{definition}[\protect{\cite{Sa79}}]
Let $(R,\m,\mathsf k)$ be a noetherian Cohen--Macaulay local ring of dimension $d$.

\begin{enumerate}[(a)]
    \item When $d=0$, we say that $R$ is \emph{stretched} if $\lambda(R)- \embdim (R)$ is the least integer $i$ such that $\m^{i+1}=0$, where $\lambda(R)$ denotes the length of $R$.
    \item When $d>0$, we say that $R$ is \emph{stretched} if there is a minimal reduction $\underline{x}=x_1, \ldots, x_d$ of $\m$ such that $R/(\underline{x})$ is stretched. 
\end{enumerate}  
\end{definition}
Note that regular local rings are not stretched. Also, if $\dim(R)=0$, then $R$ is stretched if and only if $\m^2$ is principal.

In this section, we focus on the periodicity of ideals of minors for modules over artinian stretched Gorenstein rings. The case $\dim(R)>0$ is addressed in \Cref{sec:Deformation}.

 The following theorem provides the structure of stretched Gorenstein rings (see \cite{Sa79}).

\begin{theorem}\label{thm:structure-of-stretched-Gorenstein-rings}
    Let $(S,\mathfrak n)$ be a regular local ring, and $I \subseteq \mathfrak n^2$ be such that $R=S/I$ is an artinian stretched Gorenstein ring. Let $\m=\mathfrak n/I$ and $e=\edim(S)=\edim(R)\geq 2$. If $\ch(S/\mathfrak n) \neq 2$, then there exists a minimal generating set $\{x_1,\ldots, x_e\}$ of $\mathfrak{n}$ such that
        $$R \cong S/ (\{ x_ix_j \mid i \neq j\}, \{x_1^s-u_ix_i^2 \mid i > 1\}),$$ where the elements $u_i$ are units in $S$, and $\m^s=\soc(R)$.  
\end{theorem}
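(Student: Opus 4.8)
The plan is to use the general structure theory of stretched artinian local rings due to Sally, together with the Gorenstein hypothesis, which forces the socle to be one-dimensional. First I would set up the associated graded ring $\operatorname{gr}_{\m}(R) = \bigoplus_{i\geq 0}\m^i/\m^{i+1}$ and record the Hilbert function: since $R$ is stretched and artinian of socle degree $s$, the Hilbert function is $(1, e, 1, 1, \dots, 1, 0)$ with the string of $1$'s running in degrees $2$ through $s$. In particular $\m^2$ is principal, say generated by the class of a single element $z$, and $\m^{s}\neq 0$, $\m^{s+1}=0$. Because $R$ is Gorenstein, $\soc(R)=\m^s$ is one-dimensional, and the perfect pairing $\m^i/\m^{i+1}\times \m^{s-i}/\m^{s-i+1}\to \m^s$ (coming from Gorenstein duality on the artinian ring) is what controls the multiplication. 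The bulk of the work is to choose coordinates $x_1,\dots,x_e$ so that the quadratic part of the relation ideal takes the stated form.

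Next I would choose the minimal generating set $\{x_1,\dots,x_e\}$ of $\m$. Since $\dim_{\mathsf k}(\m^2/\m^3)=1$, the symmetric bilinear form $q\colon (\m/\m^2)\times(\m/\m^2)\to \m^2/\m^3\cong \mathsf k$ given by multiplication is a nonzero symmetric bilinear form on an $e$-dimensional $\mathsf k$-vector space. Here is exactly where $\ch(\mathsf k)\neq 2$ enters: a nonzero symmetric bilinear form over a field of characteristic $\neq 2$ can be diagonalized, and in fact we can arrange (after scaling) a form whose Gram matrix is diagonal; moreover since the form has rank $\geq 1$ we may normalize one basis vector, call its class $\bar x_1$, so that $q(\bar x_1,\bar x_1)\neq 0$, and diagonalize the rest so that $q(\bar x_i,\bar x_j)=0$ for $i\neq j$ and $q(\bar x_i,\bar x_i)=\lambda_i q(\bar x_1,\bar x_1)$ for scalars $\lambda_i$. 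Lifting $\bar x_i$ to elements $x_i\in\m$, this says precisely that $x_ix_j\in\m^3$ for $i\neq j$ and $x_i^2 - u_i' x_1^2\in \m^3$ for suitable units $u_i'$ (with $u_1'=1$). I would then argue that $\m^3$ is itself a principal ideal generated by $x_1^3$ — or more generally $\m^k=(x_1^k)$ for $2\le k\le s$ — using the fact that the Hilbert function is eventually $1$ and $x_1$ generates $\m^2/\m^3$ modulo the relations already obtained. The nondegeneracy needed to keep $x_1^k\neq 0$ for $k\le s$ comes from the Gorenstein pairing: if some power vanished prematurely, the socle would be too large.

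With that in hand, I would upgrade the ``mod $\m^3$'' relations to exact relations: $x_ix_j=0$ for $i\neq j$, and $x_1^s = u_i x_i^2$ for $i>1$ with $u_i$ a unit, by a descending induction on the $\m$-adic filtration, repeatedly absorbing the higher-order error terms (which all lie in $\m^3=(x_1^3)$, hence are multiples of $x_1^2$) into a modified choice of the units $u_i$ — this is a standard Nakayama/successive-approximation argument and I would not grind through it. Finally I would check that the ideal $J=(\{x_ix_j: i\neq j\},\{x_1^s-u_ix_i^2: i>1\})$ generates $I$: one shows $S/J$ has the same length as $R$ (count monomials: the only surviving ones are $1$, $x_1,\dots,x_e$, and $x_1^2,x_1^3,\dots,x_1^s$, giving length $e+s$, which matches $\lambda(R)=\embdim(R)+s$ for a stretched ring), and since $J\subseteq I$ and lengths agree, $J=I$. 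The main obstacle is the coordinate-change step: making sure the simultaneous diagonalization of the quadratic form and the clearing of higher-order terms can be done consistently without destroying the minimality of the generating set or the value $s$ of the socle degree; this is precisely where $\ch(\mathsf k)\neq 2$ is indispensable, since in characteristic $2$ a symmetric bilinear form need not be diagonalizable and the conclusion genuinely fails.
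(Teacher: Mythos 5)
The paper does not actually prove this statement: it is Sally's structure theorem for stretched Gorenstein rings, quoted from \cite{Sa79}, so your proposal has to stand on its own. Your outer skeleton is the right one: the Hilbert function is $(1,e,1,\dots,1)$, so $\m^2$ is principal; $\ch(\sk)\neq 2$ is used to produce $x_1$ with $q(\bar x_1,\bar x_1)\neq 0$ for the multiplication form $q\colon \m/\m^2\times\m/\m^2\to\m^2/\m^3$ (a nonzero symmetric form is not alternating in characteristic $\neq 2$) and to diagonalize $q$; and the closing length count ($J\subseteq I$, $\lambda(S/J)\le e+s=\lambda(R)$, hence $J=I$) is correct once the displayed relations are known to hold in $R$ with the $u_i$ units.

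The middle of the argument, however, contains a genuine error and omits the decisive step. After diagonalizing $q$ you claim $x_i^2-u_i'x_1^2\in\m^3$ with $u_i'$ \emph{units} for all $i$. For $s\ge 3$ this is impossible and conflicts with the statement you are proving: the conclusion forces $x_i^2=u_i^{-1}x_1^s\in\m^s\subseteq\m^3$ for $i>1$ while $x_1^2\notin\m^3$, so $q$ has rank one and the diagonal entries $\lambda_i$ ($i\ge 2$) are necessarily zero; diagonalization only yields $x_ix_j\in\m^3$ for $i\neq j$. Consequently the ``upgrade to exact relations'' is not the routine successive approximation you describe: multiplying by $x_1$ and adjusting the $x_i$ by multiples of powers of $x_1$ does push $x_1x_i$, $x_ix_j$, $x_i^2$ down the filtration $\m^k=(x_1^k)$, but the process stalls at the socle ($x_1\cdot\m^s=0$ gives no information), where one needs a second diagonalization, over $\sk$, of the induced symmetric form $\langle \bar x_2,\dots,\bar x_e\rangle\times\langle \bar x_2,\dots,\bar x_e\rangle\to\soc(R)$ --- another place where $\ch(\sk)\neq2$ enters. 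Finally, you never address why the $u_i$ are units, i.e.\ why $x_i^2\neq 0$; this, and not the nonvanishing of $x_1^k$ for $k\le s$ (which follows from $\m^k=(x_1^k)\neq 0$ with no duality at all), is where the Gorenstein hypothesis is really used: $\soc(R)=\m^s$ because the one-dimensional socle contains $\m^s\neq0$, and if $x_i^2$ were $0$ after arranging $x_ix_j=0$ for all $j\neq i$, then $x_i\in\soc(R)\subseteq\m^2$, contradicting minimal generation. So the strategy is salvageable along these lines, but as written the key quadratic relations are wrong whenever $s\ge3$ and the essential use of Gorensteinness is missing.
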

\begin{remark}\label{rem:stretched-Gorenstein-remark} Let $(R,\m,\sk)$ be an artinian stretched Gorenstein ring with $\edim(R)=e\geq 2$ and $\ch(\sk) \neq 2$. 
     Note that by the Cohen structure theorem, artinian local rings are quotients of regular local rings. Thus, \Cref{thm:structure-of-stretched-Gorenstein-rings} includes the class of all artinian stretched Gorenstein rings. For the remainder of this section, we will adopt the notation of \Cref{thm:structure-of-stretched-Gorenstein-rings}.
    Observe that in this case, for all $i\geq 2$, the ideal $\m^i$ is generated by $x_1^i$.
\end{remark}

Note that  artinian Gorenstein rings of embedding dimension at most $2$ are complete intersection rings (see \cite[Corollary 21.20]{Eisbook} or \cite[Proposition 5.3.4]{Avramov1998}). On the other hand, if $R$ is an artinian stretched Gorenstein ring and $\edim(R) \geq 3$, then from \Cref{thm:structure-of-stretched-Gorenstein-rings}, we see that $R$ is not a complete intersection ring.

We recall an important ingredient in our proofs in the sequel.

\begin{proposition}[\protect{\cite[Proposition 2.2]{GP90}}]\label{prop:strict-inequality-of-Betti-numbers} 
 Let $(R,\m, \mathsf k)$ be an artinian local ring with $\edim(R) = e$ and length $l$. Set $h = \min\{ i \mid \m^{i+1} = 0 \}$. If $M$ is a finitely generated $R$-module, then
 $$\beta_{n+1}(M) \geq (2e-l+h-1) \beta_n(M)$$
 for $n \geq \mu(M)$.
\end{proposition}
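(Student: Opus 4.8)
The plan is a rank‑and‑length count on the minimal free resolution $F_\bullet=(F_\bullet,\partial_\bullet)$ of $M$, organised around the \emph{linear part} of each differential. Write $\beta_i=\beta_i(M)$, $\Omega^i=\Omega^i_R(M)=\operatorname{im}(\partial_i)=\ker(\partial_{i-1})$, $e=\edim(R)$, $l=\lambda(R)$, so that by minimality $\Omega^i\subseteq\mathfrak m F_{i-1}$ and $\mu(\Omega^i)=\beta_i$. I would first record the structural consequences of $\mathfrak m^{h+1}=0$: for every $n\ge 1$ one has $\mathfrak m^h\Omega^n\subseteq\mathfrak m^h\cdot\mathfrak m F_{n-1}=0$, and applying $\partial_n$ then gives $\mathfrak m^hF_n\subseteq\ker\partial_n=\Omega^{n+1}$. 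Thus each syzygy is a module over $R/\mathfrak m^h$ sitting inside $\mathfrak m$ times a free module, and the hypothesis $n\ge\mu(M)$ will be needed only to keep these syzygies from degenerating.

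The engine of the proof is the inequality $\beta_{n+1}\ge e\beta_n-\mu(\mathfrak m\Omega^n)$, obtained by reducing $\partial_{n+1}$ modulo $\mathfrak m^2$. Since $\partial_{n+1}(\mathfrak m F_{n+1})\subseteq\mathfrak m\Omega^{n+1}\subseteq\mathfrak m^2F_n$, there is a well‑defined $\mathsf k$‑linear map $\overline{\partial}_{n+1}\colon F_{n+1}/\mathfrak m F_{n+1}\longrightarrow\mathfrak m F_n/\mathfrak m^2F_n$ with image $(\Omega^{n+1}+\mathfrak m^2F_n)/\mathfrak m^2F_n$. On one hand $\mathfrak m\Omega^{n+1}\subseteq\Omega^{n+1}\cap\mathfrak m^2F_n$, so this image is a quotient of $\Omega^{n+1}/\mathfrak m\Omega^{n+1}$ and $\operatorname{rank}_{\mathsf k}\overline{\partial}_{n+1}\le\mu(\Omega^{n+1})=\beta_{n+1}$. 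On the other hand $\partial_n$ induces an isomorphism $\mathfrak m F_n/\Omega^{n+1}\xrightarrow{\ \sim\ }\mathfrak m\Omega^n$ (its restriction to $\mathfrak m F_n$ has image $\mathfrak m\Omega^n$ and kernel $\mathfrak m F_n\cap\Omega^{n+1}=\Omega^{n+1}$) which carries $\mathfrak m^2F_n$ onto $\mathfrak m^2\Omega^n$, whence $\operatorname{coker}\overline{\partial}_{n+1}\cong\mathfrak m\Omega^n/\mathfrak m^2\Omega^n$ and $\operatorname{rank}_{\mathsf k}\overline{\partial}_{n+1}=e\beta_n-\mu(\mathfrak m\Omega^n)$. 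Comparing the two expressions yields the inequality.

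It then suffices to prove $\mu(\mathfrak m\Omega^n)\le(l-e-h+1)\beta_n$, since this gives $\beta_{n+1}\ge e\beta_n-(l-e-h+1)\beta_n=(2e-l+h-1)\beta_n$. This is the crux. The containment $\mathfrak m\Omega^n\subseteq\mathfrak m^2F_{n-1}$ traps $\mathfrak m\Omega^n$ inside a direct sum of copies of the ``small'' module $\mathfrak m^2$, which has length $l-1-e$ and, when $R$ is stretched, is cyclic of length $h-1$ (isomorphic to $\mathsf k[t]/(t^{\,h-1})$); combining this with the filtration $\Omega^n\supseteq\mathfrak m\Omega^n\supseteq\cdots\supseteq\mathfrak m^h\Omega^n=0$ and the structure theory of submodules of free modules over $\mathsf k[t]/(t^{\,h-1})$ should yield an estimate of the shape $\mu(\mathfrak m\Omega^n)\le(l-e-h+1)\beta_{n-1}$. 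The remaining, and subtler, point is to pass from $\beta_{n-1}$ to $\beta_n$, i.e.\ to rule out that the Betti sequence drops at stage $n$ (equivalently, that $\Omega^n$ looks like a free $R/\mathfrak m^j$‑module); this non‑degeneracy is exactly what $n\ge\mu(M)$ should provide.

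I expect this last step to be the only real obstacle. The naive estimate $\mu(\mathfrak m\Omega^n)=\mu\bigl(\sum_i\mathfrak m g_i\bigr)\le\sum_i\mu(\mathfrak m g_i)\le e\beta_n$ (over minimal generators $g_1,\dots,g_{\beta_n}$ of $\Omega^n$) is useless and, worse, choice‑dependent; extracting the sharp constant $l-e-h+1$ genuinely requires using both that $\Omega^n$ embeds in $\mathfrak m F_{n-1}$ and that it is non‑degenerate for $n\ge\mu(M)$. As a sanity check: when $h=1$ one has $\mathfrak m\Omega^n\subseteq\mathfrak m^2F_{n-1}=0$, so $\beta_{n+1}\ge e\beta_n$ for free; and when $h=2$ the module $\mathfrak m^2F_{n-1}$ is semisimple, so $\mathfrak m\Omega^n$ is simultaneously a $\mathsf k$‑subspace of $\mathfrak m^2F_{n-1}$ (dimension $\le(l-1-e)\beta_{n-1}$) and a quotient of $\mathfrak m F_n/\mathfrak m^2F_n$ (dimension $\le e\beta_n$), so the whole proposition reduces to the single inequality $\beta_{n-1}\le\beta_n$ for $n\ge\mu(M)$ --- which is where I would expect to put in the most work.
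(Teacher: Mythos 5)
The paper itself does not prove this proposition; it is quoted verbatim from \cite[Proposition 2.2]{GP90}, so your attempt has to stand on its own, and it does not yet constitute a proof. The first half is fine: for $n\ge 1$ the facts $\m^h F_n\subseteq\Omega^{n+1}$ and $\beta_{n+1}\ge e\beta_n-\mu(\m\Omega^n)$ (via the linear part of $\partial_{n+1}$, using $\m F_n/\Omega^{n+1}\cong\m\Omega^n$) are correct, and they correctly reduce the statement to the inequality $\mu(\m\Omega^n)\le(l-e-h+1)\beta_n$. But that inequality --- which you yourself call the crux --- is never established. You replace it by two further unproved claims: (i) a bound of the shape $\mu(\m\Omega^n)\le(l-e-h+1)\beta_{n-1}$, which you say ``should'' follow from $\m\Omega^n\subseteq\m^2F_{n-1}$; the tools you invoke for it ($\m^2$ cyclic, submodules of free modules over $\sk[t]/(t^{h-1})$) are available only for stretched rings, whereas the proposition concerns an arbitrary artinian local ring, so even this step requires an actual argument (some socle/filtration count inside $\m^2F_{n-1}$), which you do not give; and (ii) the passage from $\beta_{n-1}$ to $\beta_n$, i.e.\ weak monotonicity of the Betti sequence for $n\ge\mu(M)$. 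The hypothesis $n\ge\mu(M)$ is in fact never used anywhere in your write-up; you only express the hope that it ``should provide'' the needed non-degeneracy. For precisely the rings in which the proposition has content (those with $2e-l+h-1\ge 1$) this monotonicity is of essentially the same depth as the proposition itself, so deferring it amounts to deferring the theorem; your own $h=2$ sanity check makes this explicit, since there the entire statement collapses to the unproved inequality $\beta_{n-1}\le\beta_n$.

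So what you have is a sensible and correct reduction (steps that very likely overlap with the opening of the argument in \cite{GP90}), followed by a genuine gap at the decisive step. To turn it into a proof you must supply both (i) and (ii), including an argument that actually exploits $n\ge\mu(M)$, or else reproduce the counting argument of Gasharov--Peeva, which this paper simply cites.
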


\begin{remark}\label{rem:betti-no-over-artinian-stretched}
Let $(R, \m,\mathsf k)$ be an artinian stretched Gorenstein ring with $\ch(\mathsf k)\neq 2$, $\embdim(R)=e\geq 3$, length $l$, and set $h=\min\{i \mid \m^{i+1} =0\}$. Then by \Cref{prop:strict-inequality-of-Betti-numbers}, given any finitely generated $R$-module $M$, for all $n \geq \mu(M)$ we have $$\beta_{n+1}(M) \geq (2e-l+h-1)\beta_n(M) = (e-1) \beta_n(M) \geq 2 \beta_n(M).$$ 
\end{remark}

In particular, if $M$ is not free, the sequence of Betti numbers of $M$ is eventually  strictly increasing. In our case, this phenomenon guarantees the syzygy modules of $M$ to contain generators with special properties, a fact that will be used in the sequel.

\begin{lemma}\label{lemma:existence-of-annihilator-variable}
Let $(R,\m, \sk)$ be a noetherian local ring and $\varphi\colon R^n\to R^m$ be such that $\varphi(R^n)\subseteq \m R^m$. 
Suppose that \begin{enumerate}[{\rm (a)}]
    \item $\mu( \varphi(R^n))=n$;
    \item $m<n$;
    \item there exists a minimal generating set $\{ x_1, x_2, \ldots, x_e\}$ of $\m$ with $x_ex_j=0$ for $1\leq j\leq e-1$ and $x_e^3=0$.
\end{enumerate}  Then $x_e u=0$ for some minimal generator $u$ of $\varphi(R^n)$.
\end{lemma}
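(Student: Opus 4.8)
The plan is to realize the desired minimal generator as $\varphi(u)$ for a suitable $u \in R^n \setminus \m R^n$, the existence of such a $u$ being forced by a dimension count that plays hypothesis (b) against the structure of $\m$ coming from (c).

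First I would record what (c) gives. Since $x_e x_j = 0$ for $1 \le j \le e-1$, we have $x_e \m = (x_e^2)$, and then $x_e \m^2 = 0$, the only generator of $x_e\m^2$ not killed by one of the relations $x_e x_j = 0$ being $x_e^3$. Consequently $\m \cdot (x_e^2 R) = 0$, so $x_e^2 R$ is a cyclic $R/\m = \sk$-module and hence $\dim_{\sk}(x_e^2 R) \le 1$.

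Next, consider the $R$-linear map $\psi\colon R^n \to R^m$, $c \mapsto x_e \varphi(c)$. Since $\varphi(R^n) \subseteq \m R^m$, its image lies in $x_e \m R^m = x_e^2 R^m = (x_e^2 R)^m$; and $\psi(\m R^n) \subseteq x_e \m \varphi(R^n) \subseteq x_e \m^2 R^m = 0$, so $\psi$ factors through a $\sk$-linear map $\bar\psi\colon R^n/\m R^n \cong \sk^n \to (x_e^2 R)^m$. The target has $\sk$-dimension at most $m$, which is $<n$ by (b); hence $\ker \bar\psi \ne 0$ (this is meaningful since $n \ge 1$, again by (b), even in the degenerate case $x_e^2 R = 0$).

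Finally, choose $0 \ne \bar u \in \ker\bar\psi$ and lift it to $u \in R^n$; then $u \notin \m R^n$ while $x_e\varphi(u) = \psi(u) = 0$. As $R$ is local, $\bar u$ extends to a $\sk$-basis of $R^n/\m R^n$, and lifting this basis to elements $u = b_1, b_2, \ldots, b_n \in R^n$ gives a generating set of $R^n$ by Nakayama, hence a basis (a generating set of $R^n$ of cardinality $n$ is a basis). Then $\varphi(b_1), \ldots, \varphi(b_n)$ generate $\varphi(R^n)$; since there are $n = \mu(\varphi(R^n))$ of them by (a), they form a minimal generating set, so $\varphi(u) = \varphi(b_1)$ is a minimal generator of $\varphi(R^n)$ killed by $x_e$, as desired. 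The computations here are all routine; the step carrying the actual content is the dimension count in the third paragraph, where (b) together with $\dim_{\sk}(x_e^2 R) \le 1$ forces the ``linear-in-$x_e$ part'' of $\varphi$, viewed as a $\sk$-linear map out of $\sk^n$, to have a nonzero kernel.
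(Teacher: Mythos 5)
Your proof is correct and rests on the same idea as the paper's: a $\sk$-dimension count exploiting $m<n$ produces a unimodular $u\in R^n$ with $x_e\varphi(u)=0$, and hypothesis (a) then makes $\varphi(u)$ a minimal generator. The only difference is packaging — you factor $x_e\varphi$ through a linear map $\sk^n\to (x_e^2R)^m$, while the paper performs column reduction on the matrix of $\varphi$ modulo $(x_1,\ldots,x_{e-1},x_e^2)R^m$; both are the same counting argument.
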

\begin{proof}
   Let $A$ be the matrix representation of $\varphi$ with respect to the standard basis of $R^n$ and $R^m$. We now show that by a suitable change of basis of $R^n$, the matrix $A$ can be reduced to a matrix $A'$ such that every entry in the last $(n-m)$ columns of $A'$ belongs to the ideal $(x_1, x_2, \ldots, x_{e-1},x_e^2)$.
    Note that every invertible column operation on $A$ corresponds to a change of basis of $R^n$. Here, we obtain a desired basis of $R^n$ by performing a series of column operations of the following two types: (i) swapping two columns, (ii) adding an $R$-multiple of one column to another column.
    
    Observe that we have $\dim_{R/\mathfrak m}(\m R^m / (x_1,x_2,\ldots,x_{e-1},x_e^2) R^m)= m$.
    Let ${\bar A}$ denote the matrix over $R/\m$ obtained by taking the images of the entries of $A$ in $\m R^m / (x_1,x_2,\ldots,x_{e-1},x_e^2) R^m$. 
    Let the column operations $\sigma_1,\sigma_2,\dots, \sigma_t$ reduce ${\bar A}$ to a matrix whose last $(n-m)$ columns are zero. Then, the lifts of the operations $\sigma_1,\sigma_2,\dots, \sigma_t$ performed on $A$ reduce it to a matrix $A'$ whose last $(n-m)$ columns are in $(x_1, x_2, \ldots, x_{e-1},x_e^2)R^m$. Since $x_ex_j=0$ for $1\leq j\leq e-1$ and $x_e^3=0$, the element $x_e$ annihilates $(x_1, x_2, \ldots, x_{e-1},x_e^2)R^m$. Thus, letting $u$ be the last column of $A'$, we see that $x_eu = 0$.  Note that due to (a), $u$ is a minimal generator of $\varphi(R^n)$, as desired.
    \end{proof}

\begin{proposition}\label{prop:existence-of-annihilator-variable-for-stretched-Gorenstein}
    Let $(R,\m,\sk)$ be an artinian stretched Gorenstein ring with $\edim(R)=e\geq 3$, $\ch(\sk) \neq 2$, and $M$ be a finitely generated non-free $R$-module. Then, with the notation as in \Cref{thm:structure-of-stretched-Gorenstein-rings}, given any $n \geq \mu(M)$, there is a minimal generator $u$ of $\Omega_{n+1}(M)$ such that $x_eu = 0$.
\end{proposition}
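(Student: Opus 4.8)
The plan is to reduce the statement to a direct application of \Cref{lemma:existence-of-annihilator-variable} with the map $\varphi = \partial_n^M\colon F_n \to F_{n-1}$ taken from a minimal free resolution of $M$, so that $\varphi(R^n) = \Omega_n(M)$ (after identifying $F_n = R^{\beta_n(M)}$ and $F_{n-1} = R^{\beta_{n-1}(M)}$). Condition (c) of the lemma is supplied directly by the structure theorem \Cref{thm:structure-of-stretched-Gorenstein-rings}: with the minimal generating set $\{x_1,\dots,x_e\}$ of $\m$ furnished there, we have $x_i x_j = 0$ for $i \neq j$, so in particular $x_e x_j = 0$ for $1 \le j \le e-1$; and since $\m^s = \soc(R)$ with $s \ge 2$ and (for $e \ge 3$) $x_e^2$ generates $\m^2$ while $x_e^3 \in \m^3$, we need to argue $x_e^3 = 0$. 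If $s = 2$ this is immediate since $\m^3 = 0$; if $s \ge 3$ one uses that $\m^3$ is generated by $x_1^3$ and the relations $x_1^s = u_i x_i^2$ — actually the cleanest route is to note that when $e \ge 3$, $\soc(R) = \m^s$ and the only powers of a single variable surviving past degree $2$ are those of $x_1$, so $x_e^2 \in \soc(R)$ forces $x_e^3 = 0$. I would spell this small ring-theoretic point out carefully, as it is the one genuinely local piece of the argument.

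Next I would verify conditions (a) and (b). Condition (a), that $\mu(\partial_n^M(F_n)) = \beta_n(M) = \rank F_n$, is automatic because the resolution is \emph{minimal}: the induced map $F_n \to \Omega_n(M)/\m\Omega_n(M)$ is an isomorphism, since $\partial_{n+1}$ has entries in $\m$ and $F_n \to \Omega_n(M)$ is surjective with kernel $\Omega_{n+1}(M) \subseteq \m F_n$. Condition (b), that $\beta_{n-1}(M) < \beta_n(M)$, is exactly where the hypothesis $n \ge \mu(M)$ enters: by \Cref{rem:betti-no-over-artinian-stretched}, for $n \ge \mu(M)$ we have $\beta_n(M) \ge 2\,\beta_{n-1}(M) > \beta_{n-1}(M)$ provided $\beta_{n-1}(M) \ne 0$, i.e.\ provided $M$ is not free (which it is not, by hypothesis, and $\Omega_{n-1}(M)$ is then non-free so $\beta_{n-1}(M) > 0$). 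One should be mildly careful about the indexing: \Cref{rem:betti-no-over-artinian-stretched} gives $\beta_{k+1}(M) \ge 2\beta_k(M)$ for $k \ge \mu(M)$, so with $k = n-1$ we need $n - 1 \ge \mu(M)$; if $n = \mu(M)$ exactly, I would instead apply the lemma with the map $\partial_{\mu(M)}^M$ but bound $\beta_{\mu(M)-1}$ against $\beta_{\mu(M)}$ using the Betti inequality at $k = \mu(M) - 1$ when that is valid, or more simply observe that the argument of \Cref{lemma:existence-of-annihilator-variable} only needs $m < n$ and this chain of Betti inequalities applies from $\mu(M)$ onward — so I would state the proposition's conclusion for $n \ge \mu(M)$ and check the boundary case $n = \mu(M)$ separately using that $\beta_{\mu(M)}(M) \ge (2e - l + h - 1)\beta_{\mu(M)-1}(M)$ already, which is the content of \Cref{prop:strict-inequality-of-Betti-numbers} applied at $n = \mu(M) - 1$...

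Putting it together: for each $n \ge \mu(M)$, apply \Cref{lemma:existence-of-annihilator-variable} to $\varphi = \partial_n^M$ to obtain a minimal generator $u$ of $\partial_n^M(F_n) = \Omega_n(M)$ with $x_e u = 0$, which is precisely the claim. The main obstacle, such as it is, is bookkeeping around the index $n = \mu(M)$ and confirming $x_e^3 = 0$ from the structure theorem; both are routine but must be handled explicitly, and I would not be surprised if the cleanest write-up simply states the proposition for $n > \mu(M)$ or absorbs the boundary case by noting $\mu(\Omega_n(M)) = \beta_n(M)$ and $\beta_{n-1}(M) < \beta_n(M)$ hold throughout the range $n \ge \mu(M)$ as a consequence of \Cref{prop:strict-inequality-of-Betti-numbers} together with $M$ being non-free.
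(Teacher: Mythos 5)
Your proposal is correct and is essentially the paper's own argument: the paper likewise applies \Cref{lemma:existence-of-annihilator-variable} to a differential of the minimal free resolution, with hypothesis (b) supplied by the Betti-number growth of \Cref{rem:betti-no-over-artinian-stretched} and hypothesis (c) by \Cref{thm:structure-of-stretched-Gorenstein-rings} (your check that $x_e^3=0$, via $x_e^2\in\m^s=\soc(R)$ being killed by $\m$, is the small verification the paper leaves implicit). The boundary subtlety you flag at $n=\mu(M)$ occurs in the paper as well, since its proof applies the lemma to $\del_{n+1}$, whose image is $\Omega_{n+1}(M)$, so read literally it yields the claim from index $\mu(M)+1$ onward; this off-by-one is harmless for the application in \Cref{thm:main-theorem-stretched-Gorenstein}, where one may simply shift the syzygy index.
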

\begin{proof}
 By \Cref{rem:betti-no-over-artinian-stretched}, for every $n \geq \mu(M)$ we have $\beta_{n+1}(M) > \beta_n(M)$. Let $(F_n, \del_n)$ be a minimal free resolution of $M$. Then, applying \Cref{lemma:existence-of-annihilator-variable} to $\varphi= \del_{n+1}\colon F_{n+1}\to F_n$,  we get the required result.
\end{proof}

\begin{remark}\label{remark:minimality-of-linear-generators}
    Let $F$ be a finitely generated free $R$-module mapping minimally onto an $R$-module $M$ via a map $\varphi$. If $z \in \ker(\varphi) \cap (\m F \setminus \m^2 F)$, then $z$ is a minimal generator of $\ker(\varphi)$. This is clear since $\ker(\varphi)\subseteq \m F$ by the minimality of $\varphi$. 
    
    Furthermore, the set $\{z_1,\ldots, z_t\}\subseteq \ker(\varphi) \cap (\m F \setminus \m^2 F)$ forms a part of a minimal generating set of $\ker(\varphi)$ if their images in $\m F / \m^2F$ form an $R/\m$-linearly independent set. To see this, 
    let $a_1,\ldots,a_t \in R$ be such that $\sum\limits_{j=1}^t a_j z_j\in \m \ker(\varphi)$. Since $\ker(\varphi)\subseteq \m F$, we see that $\sum\limits_{j=1}^t a_j z_j \in \m^2 F$. 
    Now, the $R/\m$-linear independence of the images of $z_1, \ldots, z_t$ gives $a_{1}, \ldots, a_{t} \in \m$. This proves that $\{z_1, \ldots, z_t\}$ forms a part of a minimal generating set of $\ker(\varphi)$.
\end{remark}

Next we prove the periodicity of ideals of minors in the case $M$ is a module with a special generator.

\begin{lemma}\label{lemma:periodicity-for-x_e}
Let $(R, \m, \sk)$ be an artinian stretched Gorenstein local ring, $e=\edim(R)\geq 3$, $\ch(\sk) \neq 2$, and $t\in \mathbb N$.  If $N\subseteq \m R^t$ is  such that $( x_e, 0, \ldots,0)\in R^t$ is a minimal generator of $N$, then for every $r\in \mathbb N$, we have $I^R_{n,r}(N)=\m^r$ for $n\gg 0$. 
\end{lemma}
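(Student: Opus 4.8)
The plan is to exploit the special generator $(x_e,0,\dots,0)$ of $N$ to locate a linear (degree-one) entry in the differentials $\partial_n^N$ for all large $n$, and then bootstrap to higher minors using the structural results of \Cref{sec:basic-properties}. First I would set up a minimal free resolution $(G_\bullet,\partial_\bullet)$ of $N$. Since $x_e\cdot(x_e,0,\dots,0)=x_e^2\cdot(1,0,\dots,0)$ and, in the stretched Gorenstein ring, $x_e^3=0$ while $x_e x_j=0$ for $j\neq e$ (see \Cref{thm:structure-of-stretched-Gorenstein-rings} and \Cref{rem:stretched-Gorenstein-remark}), the element $x_e$ almost annihilates this generator: more precisely $x_e\cdot x_e = u_e x_e^2$ lies in $\soc(R)=\m^s$, and in the relevant case $s\ge 2$ one checks that the syzygy of $N$ picks up a linear relation "$x_e$" coming from this generator. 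Concretely, the column of $\partial_1^N$ corresponding to the first coordinate together with the relation $x_e\cdot(x_e,0,\dots,0)\in\m^2 R^t$ produces (after a minimal lift, via \Cref{remark:minimality-of-linear-generators}) a minimal generator of $\Omega_1(N)$ lying in $\m G_1\setminus\m^2 G_1$ whose only nonzero coordinate is $x_e$. Thus $\Omega_1(N)$ again contains a free summand mapped to by a generator on which multiplication by $x_e$ is "linear," and I would argue that this property propagates: there is $t'$ and a submodule $N'\subseteq\m R^{t'}$, a direct summand of $\Omega_1(N)$ (or $\Omega_2(N)$), again having $(x_e,0,\dots,0)$ among its minimal generators.

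Next, combining this self-reproducing behavior with \Cref{prop:existence-of-annihilator-variable-for-stretched-Gorenstein}: for every $n\ge\mu(N)$ there is a minimal generator $u$ of $\Omega_n(N)$ with $x_e u=0$, i.e.\ the corresponding column of $\partial_{n+1}^N$ is killed by $x_e$; but more importantly, the special generator of $N$ that is "linearly annihilated by $x_e$" feeds a genuine linear entry equal to $x_e$ (up to unit) into every differential $\partial_n^N$ for $n$ large. Once I have a single linear entry $x_e$ (up to unit) in $\partial_n^N$, I would use \Cref{rem:betti-no-over-artinian-stretched}: since $\beta_{n+1}(N)>\beta_n(N)$ grows — in fact $\beta_n(N)\to\infty$ — and since the resolution is minimal, the blocks built from this linearly-annihilated generator accumulate, so that for $n\gg0$ the differential $\partial_n^N$ contains a submatrix of the form $x_e\cdot\id_{\ell\times\ell}$ (or $[\,x_e\,]\otimes\id_{\ell\times\ell}$) with $\ell\ge r$. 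Then \Cref{l:SubMatrixMinors} (with each part of the composition equal to $1$) gives $(x_e)^r=\m^r\cdot(\text{unit})\subseteq I_{n,r}^R(N)$; since $\m^r$ is generated by $x_1^r$ which equals $u_e^{-r}x_e^{2r}$... — more carefully, by \Cref{rem:stretched-Gorenstein-remark}, $\m^r=(x_1^r)$, and one reduces the claim $(x_e^r)\supseteq\m^r$ to the identity $x_1^r\in(x_e^r)$, which holds for $r\ge 2$ since $x_1^2=u_1^{-1}x_e^2$ — handling the small cases $r=1$ separately, where $\m=(x_1,\dots,x_e)$ and one must produce each $x_i$ as an entry; here \Cref{p:MinorsMaxPowers} gives the reverse inclusion $I_{n,1}^R(N)\subseteq\m$, and the argument must instead show every $x_i$ (not just $x_e$) appears as a linear entry for $n\gg0$. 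I would handle $r=1$ by a symmetry/change-of-generators argument or by invoking that $\Omega_n(N)$ eventually contains $\sk$ (equivalently $R/\m$) as a summand, so that $I_{n,1}^R(N)=\m$.

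The main obstacle I anticipate is the propagation step: showing that the "$x_e$-linearly-annihilated generator" property of $N$ is inherited by (a summand of) its higher syzygies, stably for all large $n$, rather than just once. The point $x_e^3=0$ but $x_e^2\neq 0$ means the naive relation $x_e^2\cdot e_1$ is not itself zero, so one does not get a clean periodic $2\times 2$ block $\begin{bmatrix}x_e & x_e^2\\0 & x_e\end{bmatrix}$ for free; instead one must track how $x_e^2$ (which lies in $\soc R$) interacts with the Gorenstein duality and the growth of Betti numbers to guarantee that linear entries $x_e$ — and not merely quadratic entries $x_e^2$ — persist. I expect this to be where the hypothesis $e\ge 3$ and $\ch(\sk)\neq 2$ are really used (via \Cref{prop:existence-of-annihilator-variable-for-stretched-Gorenstein} and strict growth of Betti numbers from \Cref{rem:betti-no-over-artinian-stretched}), and where the bulk of the bookkeeping lies; the passage from "one linear entry" to "an $\ell\times\ell$ scalar block with $\ell\ge r$" is then a routine consequence of $\beta_n(N)\to\infty$ together with \Cref{l:SubMatrixMinors}.
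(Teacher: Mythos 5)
There is a genuine gap, and it starts with a misreading of the multiplication table of the ring. By \Cref{thm:structure-of-stretched-Gorenstein-rings} we have $x_ix_j=0$ for $i\neq j$ and $x_e^2=u_e^{-1}x_1^s$ a nonzero socle element, so the generator $u=(x_e,0,\dots,0)$ is \emph{not} (almost) annihilated by $x_e$: rather it is annihilated exactly by $x_1,\dots,x_{e-1}$, while $x_eu\neq 0$. Your whole strategy is built on propagating an ``$x_e$-linearly-annihilated generator'' so as to accumulate blocks $x_e\cdot\id_{\ell\times\ell}$ in the differentials and then conclude $\m^r\subseteq(x_e^r)\subseteq I^R_{n,r}(N)$. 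This end-game cannot work: since $x_ex_1=0$ and $x_e^2\in(x_1^s)$, we get $x_e^3=0$, so $(x_e^r)=0$ for $r\geq 3$, and $(x_e^2)=\soc(R)=\m^s$, which is strictly smaller than $\m^2$ unless $s=2$ (your identity ``$x_1^2=u_1^{-1}x_e^2$'' holds only in that special case). The correct target, as in the paper, is to accumulate blocks $x_1\cdot\id_{r\times r}$, because $\m^r=(x_1^r)$ for $r\geq 2$ by \Cref{rem:stretched-Gorenstein-remark}; and the generators one propagates are those killed by the \emph{other} variables, e.g.\ $x_1\omega$, $x_2\omega$, $x_3\omega$ (this is precisely where $e\geq 3$ is used).

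Even setting aside the wrong variable, the step you yourself flag as the main obstacle --- that a single linear entry, plus growth of Betti numbers, yields an $\ell\times\ell$ scalar block with $\ell\geq r$ for $n\gg 0$ --- is not routine and is exactly the content of the paper's inductive construction (\Cref{clm:syzygy}): one must choose the maps $\del_{n+1}$ so that specified linear elements $x_1\omega^{(n)}_j$, $x_2\omega^{(n)}_{\gamma+k}$, $x_3\omega^{(n)}_{\gamma+k}$ are hit by prescribed basis vectors, and then check via \Cref{remark:minimality-of-linear-generators} that the resulting linear syzygies are part of a minimal generating set, their number doubling every two steps; \Cref{rem:betti-no-over-artinian-stretched} alone does not produce such a block. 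Finally, your fallback for $r=1$ --- that $\Omega_n(N)$ eventually contains $\sk$ as a direct summand --- is false here: these rings are Gorenstein non-hypersurfaces of Burch index $0$ (as the introduction notes), and if $\sk$ split off a syzygy the periodicity would already follow from \cite[Theorem~2.15]{BDS23}, which is exactly the situation this lemma is designed to circumvent. The paper instead handles $r=1$ by observing that among the propagated generators $x_1\omega^{(n)}_j$, $x_2\omega^{(n)}_k$, every $x_i$ annihilates at least one of them, so each $x_i$ appears as an entry of a presentation matrix of a later syzygy.
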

\begin{proof}
We inductively build a minimal free resolution $F_\bullet$ of $N$ with basis $\{\omega^{(n)}_j \mid 1\leq j \leq \beta_n(N)\}$ and with differentials $\del_n\colon F_n\rightarrow F_{n-1}$, as follows.
Consider the minimal onto map $\del_0\colon F_0\to N$ such that $\del_0(\omega_1^{(0)})=(x_e, 0,\ldots, 0)$.  
By \Cref{remark:minimality-of-linear-generators}, the set $\{x_1\omega_1^{(0)},x_2\omega_1^{(0)}\}$ 
is a part of a minimal generating set for $\Omega_1(N)$.  
In a similar way, letting $\del_1 \colon F_1\to \Omega_1(N)$ be a minimal onto map with $\del_1(\omega_1^{(1)})=x_2\omega_1^{(0)}$ and $\del_1(\omega_2^{(1)})=x_1\omega_1^{(0)}$,  by \Cref{remark:minimality-of-linear-generators}, we get that $\{x_1\omega_{1}^{(1)}, x_2\omega_{2}^{(1)}, x_3\omega_{2}^{(1)}\}$ is a part of a minimal generating set of $\Omega_2(N)$. Before continuing, we have the following claim.

\begin{claim}\label{clm:syzygy}
    If $\{ x_1 \omega_{j}^{(n)}, x_2 \omega_{\gamma + k}^{(n)}, x_3 \omega_{\gamma + k}^{(n)} \mid 1 \leq j \leq \gamma, 1 \leq k \leq \delta \}$ is a part of a minimal generating set of $\Omega_{n+1}(N)$, then there exists a minimal onto map $\del_{n+1}\colon F_{n+1}\to \Omega_{n+1}(N)$ such that 
\begin{enumerate}[(i)]
    \item  $\del_{n+1}(\omega_k^{(n+1)})= x_2\omega_{\gamma + k}^{(n)}$ for $1\leq k \leq \delta$,
    \item  $\del_{n+1}(\omega_{\delta+k}^{(n+1)})=  x_3\omega_{\gamma + k}^{(n)}$ for $1\leq k \leq \delta$, 
    \item  $\del_{n+1}(\omega_{2\delta+j}^{(n+1)})= x_1\omega_{j}^{(n)}$ for $1\leq j \leq \gamma$,
\end{enumerate}

and $\{ x_1 \omega_{k}^{(n+1)}, x_2 \omega_{2 \delta + j}^{(n+1)}, x_3 \omega_{2 \delta + j}^{(n+1)} \mid 1 \leq k \leq 2 \delta, 1 \leq j \leq \gamma \}$ is a part of a minimal generating set of $\Omega_{n+2}(N)$. 
\end{claim}

The proof of Claim~\ref{clm:syzygy} is similar to the way we obtained a part of a minimal generating set of $\Omega_2(N)$ above, so we will leave the details to interested readers.
\par Since $\{x_1\omega_{1}^{(1)}, x_2\omega_{2}^{(1)}, x_3\omega_{2}^{(1)}\}$ is a part of a minimal generating set of $\Omega_2(N)$, by applying Claim~\ref{clm:syzygy} repeatedly, we get that for any $n \in \mathbb N$, 
\begin{enumerate}[(a)]
    \item $\{ x_1 \omega^{(2n)}_{j} \mid 1 \leq j \leq 2^n\}$ is a part of a minimal generating set of $\Omega_{2n+1}(N)$.
    \item $\{ x_1 \omega^{(2n+1)}_{j} \mid 1 \leq j \leq 2^n\}$ is a part of a minimal generating set of $\Omega_{2n+2}(N)$.
\end{enumerate}

For $r\geq 2$, let $s\geq \log_2 r$. Then, for all $n \geq 2s+2$, we see that $x_1 {\id_{r \times r}}$ is a submatrix of a minimal presentation matrix of $\Omega_{n-1}(N)$, which shows $(x_1^r) = \m^r \subseteq I^R_{n,r}(N)$. Since $I^R_{n,r}(N) \subseteq \m^r$, we get $I^R_{n,r}(N) = \m^r$.

For $r=1$, observe that for any $n \geq 2$, the set $\{ x_1 \omega_{j}^{(n)}, x_2 \omega_{k}^{(n)} \}$ for some $1 \leq j,k \leq \beta_{n}(N)$ is a part of a minimal generating set of $\Omega_{n+1}(N)$. Then, given any $1 \leq i \leq e$, $x_i$ annihilates one of these minimal generators. Hence, there exists a minimal presentation matrix of $\Omega_{n+1}(N)$ with $x_i$ as one of its entries for every $i$. Therefore, $I^R_{n,1}(N) = \m$ for all $n \geq 3$.
\end{proof}

We are now in a position to prove our main result of this section.

\begin{theorem}\label{thm:main-theorem-stretched-Gorenstein}
    Let $(R,\m, \sk)$ be an artinian stretched Gorenstein ring with $\edim(R)=e\geq 3$ and $\ch(\sk) \neq 2$. Then given any finitely generated non-free $R$-module $M$ and $r \in \mathbb N$, we have $I^R_{n,r}(M)=\m^r$ for $n\gg 0$.
\end{theorem}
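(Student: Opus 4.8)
The plan is to reduce the general module $M$ to the special situation handled by \Cref{lemma:periodicity-for-x_e} via a syzygy. Fix a finitely generated non-free $R$-module $M$ and an integer $r\geq 1$. By \Cref{p:MinorsMaxPowers} we always have $I^R_{n,r}(M)\subseteq\m^r$, so it suffices to prove the reverse inclusion for all $n\gg 0$. The key observation is \Cref{prop:existence-of-annihilator-variable-for-stretched-Gorenstein}: since $\edim(R)=e\geq 3$ and $\ch(\sk)\neq 2$, for every $n\geq\mu(M)$ there is a minimal generator $u$ of $\Omega_n(M)$ with $x_eu=0$. The cyclic submodule $Ru\subseteq\Omega_n(M)$ is a copy of $R/\ann(u)$, but what we actually want is the \emph{free} direct summand behaviour; instead, I would look at $\Omega_n(M)$ itself, which sits inside $\m F_{n-1}$ for the minimal free resolution $(F_\bullet,\del_\bullet)$ of $M$, and which has $(x_e,0,\dots,0)$ (after a suitable choice of basis of $F_{n-1}$ placing $u$ first) as a minimal generator. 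Thus $N:=\Omega_n(M)$ satisfies exactly the hypotheses of \Cref{lemma:periodicity-for-x_e} with $t=\beta_{n-1}(M)$.

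Now fix any $n_0\geq\mu(M)$ and set $N=\Omega_{n_0}(M)$. By \Cref{lemma:periodicity-for-x_e} there is an integer $m_0$ (depending on $N$, hence on $M$, $n_0$, and $r$) such that $I^R_{m,r}(N)=\m^r$ for all $m\geq m_0$. The minimal free resolution of $N$ is, up to the shift by $n_0$, the truncation $F_{\geq n_0}$ of the minimal free resolution of $M$; concretely $\Omega_m(N)\cong\Omega_{m+n_0}(M)$ and the differential matrices agree. Hence $I^R_{m+n_0,r}(M)=I^R_{m,r}(N)$ for all $m\geq 1$ — this is precisely the content of \Cref{p:SyzSumMinors} applied with $N$ a direct summand (indeed all of) $\Omega_{n_0}(M)$, giving $I^R_{m,r}(N)\subseteq I^R_{n_0+m,r}(M)$, and the reverse inclusion follows because the resolutions coincide after the shift. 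Therefore $\m^r=I^R_{m,r}(N)\subseteq I^R_{n_0+m,r}(M)\subseteq\m^r$ for all $m\geq m_0$, i.e.\ $I^R_{n,r}(M)=\m^r$ for all $n\geq n_0+m_0$, which is the desired conclusion.

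The main technical point to get right is the bookkeeping in passing from "$u$ is a minimal generator of $\Omega_n(M)$ with $x_eu=0$" to "$\Omega_n(M)$ contains $(x_e,0,\dots,0)$ as a minimal generator inside $\m R^t$" in a way that legitimately invokes \Cref{lemma:periodicity-for-x_e}: one must choose the basis of $F_{n-1}$ so that the lift of $u$ to $F_{n-1}=R^t$ is the standard generator $(x_e,0,\dots,0)$ — but a minimal generator of $\Omega_n(M)\subseteq\m F_{n-1}$ need not have the form $(x_e,0,\dots,0)$ on the nose; it is some element of $\m F_{n-1}$ whose residue in $\m F_{n-1}/\m^2 F_{n-1}$ is nonzero and which is annihilated by $x_e$. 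I would handle this by choosing coordinates on $F_{n-1}$ adapted to $u$; since $\Omega_n(M)\subseteq\m F_{n-1}$ and $x_e\cdot(\text{that minimal generator})=0$, and since $\m\cdot\m F_{n-1}/\m^2F_{n-1}=0$ forces the annihilation condition to be "visible" modulo $\m^2$, one can arrange (after a change of basis of $F_{n-1}$ over $R$) that this generator becomes $(x_e,0,\dots,0)$; the argument is essentially the one already used inside the proof of \Cref{lemma:existence-of-annihilator-variable}. Alternatively, and more cleanly, one can note that \Cref{lemma:periodicity-for-x_e} only uses that $N\subseteq\m R^t$ has \emph{some} minimal generator annihilated by $x_e$ whose residue mod $\m^2$ is a coordinate vector — but rereading its proof, the hypothesis is stated for $(x_e,0,\dots,0)$ specifically, so the change-of-basis reduction is the safest route and is the one I would write out.
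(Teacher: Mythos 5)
Your overall architecture (find a syzygy to which \Cref{lemma:periodicity-for-x_e} applies, then transfer the equality of minors back through the truncated resolution, with \Cref{p:MinorsMaxPowers} giving the upper inclusion) is the right one, but the reduction step you yourself flag as the main technical point does not work as written, and this is a genuine gap. The generator $u$ of $\Omega_n(M)$ produced by \Cref{prop:existence-of-annihilator-variable-for-stretched-Gorenstein} (via \Cref{lemma:existence-of-annihilator-variable}) is an element of $\m F_{n-1}$ all of whose coordinates lie in $(x_1,\ldots,x_{e-1},x_e^2)$; in particular it is in general \emph{not} of the form $x_e v$ for a unimodular $v\in F_{n-1}$. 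No change of basis of $F_{n-1}$ can turn such a $u$ into $(x_e,0,\ldots,0)$: the $GL(F_{n-1})$-orbit of $(x_e,0,\ldots,0)$ consists exactly of elements of $x_eF_{n-1}$ of the form $x_e\cdot(\text{basis vector})$, and e.g.\ $u=(x_1,x_2,0,\ldots,0)$ satisfies $x_eu=0$ but lies in no such orbit. Your fallback suggestion is also insufficient: \Cref{lemma:periodicity-for-x_e} does not merely use that the distinguished generator is killed by $x_e$ and has nonzero residue mod $\m^2$; its very first step is that $x_1\omega_1^{(0)}$ and $x_2\omega_1^{(0)}$ land in the next syzygy, i.e.\ the generator must be annihilated by $x_1$ and $x_2$ as well, which an arbitrary $u$ with $x_eu=0$ (say $u=(x_1+x_2,0,\ldots,0)$) need not be. So applying the lemma to $N=\Omega_n(M)$ with the generator $u$ cannot be justified.

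The correct repair, which is what the paper does, is to move one homological step further rather than normalize $u$: choose the minimal free cover $F_{\mu(M)}\twoheadrightarrow\Omega_{\mu(M)}(M)$ so that the first basis element $\omega_1$ maps to $u$. Since $x_eu=0$, the element $x_e\omega_1=(x_e,0,\ldots,0)\in\m F_{\mu(M)}\setminus\m^2F_{\mu(M)}$ lies in the kernel, i.e.\ in $N:=\Omega_{\mu(M)+1}(M)\subseteq\m R^{\beta_{\mu(M)}(M)}$, and it is a \emph{minimal} generator of $N$ by \Cref{remark:minimality-of-linear-generators}. Now $(x_e,0,\ldots,0)$ is on the nose the generator required by \Cref{lemma:periodicity-for-x_e} (and, crucially, it is killed by $x_1,\ldots,x_{e-1}$ because $x_ix_e=0$ for $i\neq e$), so the lemma gives $I^R_{m,r}(N)=\m^r$ for $m\gg0$, and your shifting argument $I^R_{m,r}(N)=I^R_{m+\mu(M)+1,r}(M)$ finishes the proof exactly as you intended.
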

\begin{proof}
By \Cref{prop:existence-of-annihilator-variable-for-stretched-Gorenstein}, there is a minimal generator $u$ of $\Omega_{\mu(M)+1}(M)$ such that $x_e u=0$. Let $\{u=u_1, \ldots, u_{\beta_{\mu(M)+1}(M)}\}$  be a minimal generating set of $\Omega_{\mu(M)+1}(M)$. Let $N \coloneqq \Omega_{\mu(M)+2}(M)\subseteq \m R^{\beta_{\mu(M)+1}(M)}$. Then by \Cref{remark:minimality-of-linear-generators}, $(x_e, 0,\ldots,0)\in R^{\beta_{\mu(M)+1}(M)}$ is a minimal generator of $N$. Hence, by   \Cref{lemma:periodicity-for-x_e}, we get that $I^R_{n,r}(N)=\m^r$ for $n \gg 0$. Therefore, the proof is complete since $I^R_{n,r}(N)= I^R_{n+\mu(M)+2,r}(M)$.
\end{proof}

Let $M$ be a finitely generated module over a noetherian local ring $(R,\mathfrak m)$. Our result is similar to \cite[Theorems 1.2 and 1.3]{BDS23}, which state that if $R$ is a complete intersection ring or a Golod ring, we have $I^R_{n,r}(M) = I^R_{n+2,r}(M)$ for all $n \gg 0$. In our case when $R$ is an artinian stretched Gorenstein ring with $\embdim(R)\geq 3$, Theorem~\ref{thm:main-theorem-stretched-Gorenstein} is a stronger result:   $I^R_{n,r}(M)=\mathfrak{m}^r$ for $n\gg 0$. However, when $\embdim(R)\leq 2$, this is no longer true, as shall be seen in the next examples.

\begin{example}\label{example:periodicity-for-edim-one-and-two}
\hfill{}
\begin{enumerate}[(a)]
    \item Consider the  artinian  stretched Gorenstein ring $R=\sk[[x]]/(x^3)$. Then, for the $R$-module $M= \sk$, we have $$I^R_{n,1}(M)= \begin{cases}
         (x) & \text{\ if\ } n \text{\ is\ odd} \\
         (x^2) & \text{\ if\ } n \text{\ is\ even.}
    \end{cases}
   $$  
   \item Consider the  artinian  stretched Gorenstein ring $R=\sk[[x_1,x_2]]/(x_1x_2, x_1^2-x_2^2)$. Then, for the $R$-module $M=R/(x_1)$, we have $$I^R_{n,1}(M)= \begin{cases}
         (x_1) & \text{\ if\ } n \text{\ is\ odd} \\
         (x_2) & \text{\ if\ } n \text{\ is\ even.}
    \end{cases}$$
\end{enumerate}
\end{example}

\begin{corollary}
    Let $(R,\m, \sk)$ be an artinian stretched Gorenstein ring with $\ch(\sk) \neq 2$. Then given any finitely generated non-free $R$-module $M$ and $r \in \mathbb N$, we have $I^R_{n,r}(M)=I^R_{n+2,r}(M)$ for $n\gg 0$.
\end{corollary}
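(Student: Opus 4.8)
The plan is to prove this by a dichotomy on $\embdim(R)$, reducing the two cases to results already in hand. First I would record a preliminary observation: since $R$ is artinian Gorenstein, it has depth $0$, so by the Auslander--Buchsbaum formula any finitely generated $R$-module of finite projective dimension is free. Hence the hypothesis ``$M$ non-free'' is equivalent to $\projdim_R M = \infty$, so the minimal free resolution of $M$ is infinite and the ideals $I^R_{n,r}(M)$ are genuinely defined for all $n$; this makes the statement meaningful and lets me invoke the infinite-resolution results below.

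\textbf{Case $\embdim(R)\geq 3$.} Here I would simply apply \Cref{thm:main-theorem-stretched-Gorenstein}: for every positive integer $r$ we have $I^R_{n,r}(M)=\m^r$ for all $n\gg 0$. In particular $I^R_{n,r}(M)=\m^r=I^R_{n+2,r}(M)$ for $n\gg 0$, which is even stronger than the asserted $2$-periodicity.

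\textbf{Case $\embdim(R)\leq 2$.} If $\embdim(R)=0$ then $R$ is a field and every module is free, so there is nothing to prove; if $\embdim(R)=1$ then $R\cong\sk[[x]]/(x^m)$ with $m\geq 2$ (recall stretched rings are not regular), a hypersurface. In general, an artinian Gorenstein ring of embedding dimension at most $2$ is a complete intersection by \cite[Corollary 21.20]{Eisbook} (equivalently \cite[Proposition 5.3.4]{Avramov1998}), as already noted just after \Cref{thm:structure-of-stretched-Gorenstein-rings}. Therefore \cite[Theorem 1.2]{BDS23} applies and gives $I^R_{n,r}(M)=I^R_{n+2,r}(M)$ for $n\gg 0$.

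These two cases exhaust all possibilities, completing the proof. I do not expect a genuine obstacle here: the argument is a routine case split that packages \Cref{thm:main-theorem-stretched-Gorenstein} with the complete-intersection case of \cite{BDS23}. The only points requiring a little care are the bookkeeping in the low embedding dimension case (checking that the complete intersection classification genuinely applies, and dealing with the degenerate $\embdim(R)\in\{0,1\}$ subcases) and the initial remark that ``non-free'' coincides with ``infinite projective dimension'' over an artinian Gorenstein ring.
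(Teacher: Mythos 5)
Your proposal is correct and follows essentially the same route as the paper: a case split on $\embdim(R)$, using \Cref{thm:main-theorem-stretched-Gorenstein} when $\embdim(R)\geq 3$ and reducing to the complete intersection case of \cite[Theorem 1.2]{BDS23} when $\embdim(R)\leq 2$. The only cosmetic difference is how you justify the complete intersection property in low embedding dimension (citing the classification of artinian Gorenstein rings directly, whereas the paper argues via ``Golod or complete intersection'' plus Gorenstein $+$ Golod $=$ hypersurface), which is immaterial.
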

\begin{proof}
    If $\edim(R)\geq 3$, \Cref{thm:main-theorem-stretched-Gorenstein} shows that we have $I^R_{n,r}(M)=\m^r$ for $n\gg 0$, and thus the result follows trivially. Now assume that $\edim(R)\leq 2$. Then $R$ is either a Golod ring or a complete intersection ring. In fact, in this case $R$ must be a complete intersection as Gorenstein + Golod = hypersurface (see, e.g., \cite[page 47]{Avramov1998}). The result then follows from \cite[Theorem 1.2]{BDS23}.
\end{proof}

The stretched assumption cannot be removed, as there exist artinian Gorenstein rings for which even the $k$-periodicity of minors does not hold for any integer $k$. \cite[Proposition 5.1 (2)]{BDS23} provides one such example (also see \cite[Proposition 3.1 (i)]{GP90}).

Let $(R, \m, \sk)$ be an artinian stretched Gorenstein ring. Using the proofs of  \Cref{lemma:periodicity-for-x_e} and \Cref{thm:main-theorem-stretched-Gorenstein}, for a given finitely generated $R$-module $M$ and $r\in \mathbb N$, it is possible to get a lower bound $m$ on the homological degree,  in terms of $r, \edim(R)$, and $\mu(M)$ so that $I^R_{n,r}(M)=\m^r$ for all $n\geq m$. But there does not exist any such uniform bound that is independent of $M$. We provide such an example.

\begin{example}\label{exam:no-uniform-bound-stretched-gorenstein}
    Let $(R, \m, \sk)$ be an artinian stretched Gorenstein ring. For each $n\in \mathbb N$, we will show that there is an $R$-module $M_n$ such that $I^R_{n,1}(M_n)= \soc(R)$. Thus, whenever $\m^2\neq 0$, we get that $I^R_{n,1}(M_n)\neq \m$. 
    
     Let $\soc(R)=(x)$, and $\cdots \rightarrow  F_2 \xrightarrow{\del_2} F_1 \xrightarrow[]{\del_1} R \to 0$ be a minimal projective resolution of $\sk$. Since $R$ is artinian Gorenstein, $(-)^*=\Hom_R(-, R)$ is exact. Therefore, $\cdots \to   F_1 \xrightarrow[]{\del_1} R \xrightarrow[]{[x]} R \xrightarrow[]{\del_1^*} F_1^* \xrightarrow[]{} \cdots \to F_{n-1}^* \to 0$ is a minimal projective resolution of $M_n\coloneqq \Omega_{n}(\sk)^*$, with $I^R_{n,1}(M_n)=(x)$. 
\end{example}

\section{ Deformation and the (Converse) Eisenbud--Shamash Construction}\label{sec:Deformation}
 
It is natural to ask whether there is any relationship between periodicity of ideals of minors over a noetherian local ring $R$ and $R[[x]]$ or $R/(x)$. The present section addresses this question, and we obtain some positive results for the same. One of the key ingredients we use is a converse to the Eisenbud--Shamash construction as described in \cite{BerghJorgensenMoore2020}. We reproduce it below as it will be used frequently in the remainder of the section.

\begin{construction}\cite[Section 3]{BerghJorgensenMoore2020}\label{InverseShamash}
Let $(R,\m)$ be a noetherian local ring, $x\in \m$ be a regular element, $R'=R/(x)$, and $M$ be a finitely generated $R'$-module.  {Given any free resolution $F'_\bullet$ of $M$ over $R'$ and for any lift $F_\bullet$ of $F'_\bullet$ to $R$, there exists a degree $-2$ endomorphism $\sigma$ of the complex $F_\bullet$ such that} a free resolution of $M$ as an $R$-module is given by
\[\dots\rightarrow F_n\oplus F_{n+1}\xrightarrow{\begin{bmatrix}
    \del_n^F & (-1)^{n+1}\sigma_{{n+1}}\\(-1)^nx&\del^F_{n+1}
\end{bmatrix}}F_{n-1}\oplus F_n\rightarrow\dots\rightarrow F_1\oplus F_2\xrightarrow{\begin{bmatrix}
    \del_1^F & \sigma_{{2}}\\-x&\del^F_{2}\end{bmatrix}} F_0\oplus F_1\xrightarrow{\begin{bmatrix}x&\del_1^F\end{bmatrix}}F_0\rightarrow 0.\]

    We denote this resulting resolution by $G(F'_\bullet)$.
\end{construction}

The above construction is minimal under additional assumptions, as established by the following:

\begin{lemma}
With the same assumptions as in \Cref{InverseShamash}, if, furthermore, $x\notin\m^2$ and $F'_\bullet$ is taken to be minimal, then {$G(F'_\bullet)$} is a minimal resolution of $M$ over $R$.
\end{lemma}
\begin{proof}
From \cite[Theorem 2.2.3]{Avramov1998}, the assumptions that $x\notin\m^2$ and that $F'_\bullet$ is a minimal resolution over $R'$ imply that $\beta_n^R(M) = \beta_n^{R'}(M) + \beta_{n-1}^{R'}(M)$. 
We observe, from the direct sum decomposition of $G_n$, that the ranks of free modules in the resolutions $F_\bullet'$ and {$G(F_\bullet')$} are related by the formula $\rank_{R'}(F_n') + \rank_{R'}(F_{n-1}') = \rank_R(G_n)$. Therefore, {$G(F'_\bullet)$} is a minimal free resolution of $M$ over~$R$.
\end{proof}

Let $(R, \m)$ be a noetherian local ring. Let $x \in \m$ be given. Then there exists a unique integer $i$ such that $x\in \m^i\setminus \m^{i+1}$. We let $x^*$ denote the element $x+\m^{i+1}$ of the associated graded ring $G_{\mathfrak m}(R):= \bigoplus_{n\geq 0} \m^n/\m^{n+1}$.
Following \cite{Sally79superregular}, when $x^*\in G_{\mathfrak m}(R)$ is regular, we call $x$ a \emph{super-regular element} in $R$. In a similar way, we call $x_1,\ldots, x_t \in \m$ a \emph{super-regular sequence} in $R$ if $x_1^*, \ldots, x_t^* \in G_{\mathfrak m}(R)$ is a regular sequence. It is easy to see that super-regular sequences are regular~sequences.

In the next theorem, we prove that when $x$ is a super-regular element, for a certain class of modules over $R/(x)$, the periodicity of ideals of minors can be lifted to periodicity over $R$. 
\begin{theorem}\label{thm:deformation-and-ideals-of-minors}
    Let $(R, \m_R)$ be a  noetherian local ring, $r$ be a positive integer, $x\in\m_R\setminus\m_R^2$ be super-regular, and $M$ be a finitely generated module over $R'=R/(x)$.  Suppose for each $1\leq s \leq r$, there exists $\ell_s\in \mathbb N$ such that $I_{n,s}^{R'}(M)=\m_{R'}^s$ for all $n\geq \ell_s$. 
\\   If there exists $N\in \mathbb N$ such that $\beta_n^{R'}(M)\geq r$ for all $n \geq N$, then $I_{n,r}^R(M)=\m_R^r$ for all $n\geq \max\{\ell_1, \ldots, \ell_r, N\}$.
\end{theorem}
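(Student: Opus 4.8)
The plan is to build a minimal free resolution of $M$ over $R$ using the converse Eisenbud--Shamash construction (\Cref{InverseShamash}) applied to a minimal free resolution $F'_\bullet$ of $M$ over $R'$, and then to read off the $r\times r$ minors of the resulting differentials directly from the block matrices. Since $x\in\m_R\setminus\m_R^2$ is regular (super-regularity gives regularity), \Cref{InverseShamash} and the lemma following it produce a minimal free resolution $G(F'_\bullet)$ of $M$ over $R$, whose $n$-th differential has the block form
\[
\begin{bmatrix}
    \del_n^F & \pm\sigma_{n+1}\\ \pm x & \del^F_{n+1}
\end{bmatrix}.
\]
First I would fix $n\geq\max\{\ell_1,\dots,\ell_r,N\}$. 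The inclusion $I_{n,r}^R(M)\subseteq\m_R^r$ is \Cref{p:MinorsMaxPowers}, so the whole task is the reverse inclusion $\m_R^r\subseteq I_{n,r}^R(M)$.

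For the reverse inclusion, the key observation is that $\del^F_n$ (the lift to $R$ of the $R'$-differential) is a submatrix of the block matrix above, sitting in the top-left corner. Hence $I_r^R(\del_n^F)\subseteq I_{n,r}^R(M)$, where $\del_n^F$ is now viewed as a matrix over $R$. Reducing modulo $x$, the matrix $\del_n^F$ becomes $\del_n^{F'}$, and by hypothesis $I_r^{R'}(\del_n^{F'})=I_{n,r}^{R'}(M)=\m_{R'}^r$ because $n\geq\ell_r$. So modulo $x$, the $r\times r$ minors of $\del_n^F$ generate $\m_{R'}^r=(\m_R/(x))^r=\m_R^r/(x)\cap\text{(stuff)}$; more precisely, $I_r^R(\del_n^F)+(x)=\m_R^r+(x)$ inside $R$. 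This gives $\m_R^r\subseteq I_r^R(\del_n^F)+(x)\subseteq I_{n,r}^R(M)+(x)$. To promote this to $\m_R^r\subseteq I_{n,r}^R(M)$ I would show that $x\in I_{n,r}^R(M)$, or rather that $x\m_R^{r-1}\subseteq I_{n,r}^R(M)$ which together with the previous containment finishes the job by an easy induction (peeling off one factor of $x$ at a time). This is where the hypothesis $\beta_n^{R'}(M)\geq r$ enters: in the block matrix the $(n)$-column block $\begin{bmatrix}\pm\sigma_{n+1}\\ \del^F_{n+1}\end{bmatrix}$ meets the $x$-block $\begin{bmatrix}\del_n^F & \pm x\end{bmatrix}$... — more usefully, the entry $\pm x$ is really $\pm x\cdot\id_{\beta_n^{R'}(M)\times\beta_n^{R'}(M)}$, an $x$-scaled identity block of size $\beta_n^{R'}(M)\geq r$, sitting in the lower-left corner of $\del_n^{G}$. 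The plan is to form an $r\times r$ minor by choosing $r$ rows from this $x$-identity block together with $r-1$ columns from the top-left $\del_n^F$-block and one column from the $x$-identity block: expanding, such a minor equals (up to sign) $x$ times an $(r-1)\times(r-1)$ minor of $\del_n^F$. Since $n-1\geq \ell_{r-1}$ (as $n\geq\max\ell_s$), reducing mod $x$ shows these $(r-1)\times(r-1)$ minors generate $\m_{R'}^{r-1}$, hence $I_{r-1}^R(\del_n^F)+(x)=\m_R^{r-1}+(x)$, so $x I_{r-1}^R(\del_n^F)+(x^2)\supseteq x\m_R^{r-1}$... and by iterating (or by a direct argument using that the relevant minors of $\del_n^F$ together with $x$ generate $\m_R$) one gets $x\m_R^{r-1}\subseteq I_{n,r}^R(M)+(x^2)$, then bootstrapping gives $x\m_R^{r-1}\subseteq I_{n,r}^R(M)$.

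To organize this cleanly I would prove the single clean statement: with $A=\del_n^F$ viewed over $R$ and $x\cdot\id_{q\times q}$ the lower-left block ($q=\beta_n^{R'}(M)\geq r$), one has $I_{n,r}^R(M)\supseteq \sum_{j=0}^{r} x^{j}\,I_{r-j}^R(A)$ by the minor-expansion described above (choosing $j$ rows and $j$ columns from the $x$-identity block and the rest from $A$ — this uses $q\geq r$). Then, since $I_{s}^R(A)\equiv I_{n,s}^{R'}(M)=\m_{R'}^s\pmod{x}$ for every $1\leq s\leq r$ (valid because $n\geq\ell_s$ for all such $s$), an elementary induction on $j$ shows $\sum_{j=0}^r x^j I_{r-j}^R(A)\supseteq\m_R^r$: the $j=0$ term gives everything mod $x$, the $j=1$ term fixes up the $x$-multiples mod $x^2$ using $I_{r-1}^R(A)\equiv\m_R^{r-1}$, and so on, with the tower terminating because $x^r\in x^r I_0^R(A)=x^r R$ and $\m_R^r\ni x^r$ trivially while the remaining generators of $\m_R^r$ are handled by the lower terms.

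The main obstacle I anticipate is the last bookkeeping step: carefully checking that the nested congruences $I_s^R(A)+(x)=\m_R^s+(x)$ for all $s\le r$ really do assemble, via the minors $\sum_j x^j I_{r-j}^R(A)$, into the full $\m_R^r$ rather than just $\m_R^r+(x^{k})$ for some $k$. This requires being a little careful about which $(r-j)\times(r-j)$ submatrices of $A$ one is allowed to pair with which $j$ columns of the $x$-identity block (the row and column index sets must be disjoint in the appropriate sense), and making sure the expansion of the chosen $r\times r$ submatrix of $\del_n^G$ genuinely factors as $\pm x^j$ times a minor of $A$ — i.e. that the off-diagonal $\sigma$ and $\del^F_{n+1}$ blocks do not interfere. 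Choosing the $j$ rows from the $x\cdot\id$ block and the remaining $r-j$ rows from the $\del_n^F$ block, and the $j$ columns entirely inside the first $q$ columns hitting the $x\cdot\id$ block in a permutation pattern, isolates a block-triangular $r\times r$ submatrix whose determinant is exactly $\pm x^j\det(\text{submatrix of }A)$; this is the crux and I would write it out explicitly.
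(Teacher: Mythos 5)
Your construction and the matrix-level part of the argument coincide with the paper's proof: you use the converse Eisenbud--Shamash resolution $G(F'_\bullet)$, minimality transferring because $x\notin\m_R^2$, and $r\times r$ minors of $\del_n^G$ of block-triangular shape, obtained by pairing $j$ matching rows and columns of the lower-left block $x\cdot\id_{\beta_n^{R'}(M)}$ (with column indices disjoint from those used in $A=\del_n^F$) with an $(r-j)\times(r-j)$ minor of $A$; this yields $x^{j}I^R_{r-j}(A)\subseteq I^R_{n,r}(M)$ and is exactly where $\beta_n^{R'}(M)\geq r$ is needed, just as in the paper (your earlier phrasing with ``$r$ rows from the $x$-identity block'' is a slip, but your later ``clean statement'' fixes it). The genuine gap is in the final assembly step, which you describe as an ``elementary induction on $j$'' and ``bootstrapping.'' Nowhere do you use the hypothesis that $x$ is \emph{super-regular} (you only use regularity, to run the construction), yet that hypothesis is precisely what makes the induction close, and it is where the paper's proof does its real work.

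Concretely: lifting an expression of $y_2^{\alpha_2}\cdots y_e^{\alpha_e}\in I^{R'}_{n,s}(M)=\m_{R'}^{s}$ to $R$ gives an identity $y_2^{\alpha_2}\cdots y_e^{\alpha_e}+xz=\sum_{H,K}a_{H,K}\det(A_{H,K})$ with an \emph{uncontrolled} error $z\in R$; after multiplying by $x^{r-s}$ via your block minors, you are left needing $x^{r-s+1}z\in I^R_{n,r}(M)$, knowing only that $x^{r-s+1}z\in\m_R^{r}$. To absorb this into the lower terms $x^{j}I^R_{r-j}(A)$ of your sum you need $z\in\m_R^{s-1}$, i.e.\ the implication ``$x^{k}z\in\m_R^{r}\Rightarrow z\in\m_R^{r-k}$.'' For $r-k\leq 1$ this follows from $x\notin\m_R^2$ (which is why the sketch feels fine for small $r$), but for $r-k\geq 2$ it fails for a merely regular element: in $R=\sk[[a,b]]/(a^2-b^3)$ the element $x=a$ is regular and $a\notin\m_R^2$, yet $a\cdot a=b^{3}\in\m_R^{3}$ while $a\notin\m_R^{2}$. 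The implication holds exactly because $x^{*}$ is a nonzerodivisor in $G_{\m_R}(R)$. The paper organizes this as an induction on $s$: the base case uses $x^{r}\in I^R_{n,r}(M)$ (from the $x\cdot\id$ block), and at each step super-regularity converts $x^{r-s+1}z\in\m_R^{r}$ into $z\in\m_R^{s-1}$, after which the inductive containment $x^{r-s+1}\m_R^{s-1}\subseteq I^R_{n,r}(M)$ absorbs the error. So the difficulty you flagged (getting $\m_R^{r}$ rather than $\m_R^{r}+(x^{k})$) is real, but it is not resolved by careful choice of index sets in the minor expansion; your proof needs this extra graded-ring argument, and as written it does not go through for $r\geq 3$.
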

\begin{proof}
       Let $F'_\bullet$ be a minimal resolution of $M$ over $R'$ and $G_\bullet=G(F'_\bullet)$ the associated minimal free resolution of $M$ over $R$.
    Let $x,y_2,\dots,y_e$ be a minimal set of generators of $\m_R$, so that the images of $y_2,\dots,y_e$ in $R'$ generate $\m_{R'}$.
    A set of generators of $\m_R^r$ is given by all elements of the form $x^{\alpha_1}y_2^{\alpha_2}\cdots y_e^{\alpha_e}$, where $\alpha_1+\dots +\alpha_e=r$. 

    Let $n\geq \max\{\ell_1,\ldots, \ell_r, N\}$, and $n\in \mathbb N$, let $A_n'$ denote the matrix representing $\partial_n^{F'}$. Since $n \geq N$, the matrix $A_n'$ has at least $r$ columns, i.e.,  $A_{n+1}'$ has at least $r$ rows.
    For each $n$, let $A_n=\partial_n^F$ be a matrix which lifts $A_n'$ to $R$, as in \Cref{InverseShamash}. 
    \par 
    Now, fix $\underline{\alpha}=(\alpha_1,\dots,\alpha_e)$ such that $\sum\limits_{t = 1}^e \alpha_t = r$. Then $s \coloneqq \sum\limits_{t = 2}^e \alpha_t  \leq r$. 
    Suppose that $A'_n$ has size $i \times j$.
    Since $I_{n,s}^{R'}(M)=\m_{R'}^s$, we must be able to obtain $y_2^{\alpha_2}\cdots y_e^{\alpha_e} \in R'$ as a sum of $s\times s$ minors of $A_n'$.
    We have an equality in $R'$ of the following form:
    
     \[y_2^{\alpha_2}\cdots y_e^{\alpha_e}=\sum_{(H,K)\in \binom{[i]}{s}\times \binom{[j]}{s}} a'_{H,K}\det(A'_{H,K}).\] 

    Here $a'_{H,K}\in R'$ are coefficients, $\det(A'_{H,K})$ denotes the $s\times s$ minor of $A'_n$ formed by choosing row indices $H$ and column indices $K$, and $\binom{[i]}{s}$ and $\binom{[j]}{s}$ denote the set of size $s$ subsets of $[i]=\{1,\dots, i\}$ and the set of size $s$ subsets of $[j]$, respectively. 

    For each $\underline{\alpha}$, there exists an element $z_{\underline{\alpha}}\in R$ and lifts $a_{H,K}$ of the coefficients $a'_{H,K}$ so that the following holds in $R$:
     \[y_2^{\alpha_2}\cdots y_e^{\alpha_e} + xz_{\underline{\alpha}} =\sum_{(H,K)\in \binom{[i]}{s}\times \binom{[j]}{s}} a_{H,K}\det(A_{H,K})\]

    where we note that the right-hand side is a sum of $s\times s$ minors of the matrix $A_n$.
   
Since $A_n$ has at least $r$ columns, there are at least $\alpha_1=(r-s)$ elements of $[j]\setminus K$ not yet chosen.
We would like to show that $x^{r-s}y_2^{\alpha_2}\cdots y_e^{\alpha_e}\in I_{n,r}^R(M)$. 
    To do this, we extend each $\det(A_{H,K})$ to an $r\times r$ minor of $\del_n^G$ by choosing a set $L$ of $\alpha_1$-many indices from $[j]\setminus K$, and for each index $\ell$ so chosen, choosing the corresponding row $\ell+i$ of $\del_n^G$. Observe that $\del_n^F$ and $\del^F_{n+1}$ are submatrices of $\del_n^G$ which occur in different blocks, and none of their rows or columns intersect. 
    \par Note that the $(\ell+i,\ell)^{th}$ entry of $\del^G_n$ is an $x$ in the lower left block of $\del_n^G$.
    Diagramatically, the resulting minor will be, after a permutation of the columns, the determinant of a matrix of the~form
    \[ {\begin{bmatrix}
        A_{H,L} & A_{H,K} \\
        x\cdot\id_{\alpha_1} & 0 \end{bmatrix}}\]
        By Laplacian expansion, the determinant of this matrix is $\pm x^{\alpha_1}\det(A_{H,K})$.

Repeating the above procedure for each minor and summing over the pairs $(H,K)$, we get that $x^{r-s}y_2^{\alpha_2}\cdots y_e^{\alpha_e} +x^{r-s+1}z_{\underline{\alpha}} \in I_{n,r}^R(M)$. 
Thus, to prove $x^{r-s}y_2^{\alpha_2}\cdots y_e^{\alpha_e} \in I_{n,r}^R(M)$, it suffices to show $x^{r-s+1}z_{\underline{\alpha}} \in I_{n,r}^R(M)$. 

Using induction on $s$, we establish a strengthening: that for every $1 \leq s \leq r$, we have $x^{r-s}\m_R^s \subseteq I_{n,r}^R(M)$.

For the base case $s=0$, running the above argument with no columns chosen from $A'_n$ shows that $x^r\in I_{n,r}^R(M)$.
Inductively, we assume that $x^{r-(s-1)}\m_R^{s-1}\subseteq I_{n,r}^R(M)$ and show the same holds for $s$.

As $x^{r-s}y_2^{\alpha_2}\cdots y_e^{\alpha_e}\in \m_R^r$, the expression  $x^{r-s}y_2^{\alpha_2}\cdots y_e^{\alpha_e}+x^{r-s+1}z_{\underline{\alpha}}\in I_{n,r}^R(M)\subseteq \m^r$ shows that $x^{r-s+1}z_{\underline{\alpha}}\in\m^r$. 
Since $x^*$ is a non-zero divisor in $G_{\mathfrak{m}_R}(R)$, this forces that $z_{\underline{\alpha}}$ belongs to $\m_R^{s-1}$.
Induction then gives that $x^{r-s+1}z_{\underline{\alpha}}\in I_{n,r}^R(M)$, and then subtracting this shows that $x^{r-s}y_2^{\alpha_2}\cdots y_e^{\alpha_e}\in I^R_{n,r}(M)$.
But as $\underline{\alpha}$ ranges over all choices with $\alpha_1=r-s$, we obtain a generating set of $x^{r-s}\m_R^s$, and hence $x^{r-s}\m_R^s\subseteq I_{n,r}^R(M)$, which completes the proof.
\end{proof}

Observe that if $R'$ is a noetherian local ring and $R=R'[[x]]$, then the element $x\in R$ is super-regular. Hence, as an immediate consequence of the above theorem, we obtain the following.

\begin{corollary}\label{cor:deformation-and-ideals-of-minors-power-series}
    Let $(R',\m_{R'})$ be a  noetherian local ring, $R=R'[[x]]$, $r$ be a positive integer, and $M$ be a finitely generated $R'$-module.  Suppose for each $1\leq s \leq r$, there exists $\ell_s\in \mathbb N$ such that $I_{n,s}^{R'}(M)=\m_{R'}^s$ for all $n\geq \ell_s$. 
\\If there exists $N\in \mathbb N$ such that $\beta_n^{R'}(M)\geq r$ for all $n \geq N$, then $I_{n,r}^R(M)=\m_R^r$ for all $n\geq \max\{\ell_1, \ldots, \ell_r, N\}$.
\end{corollary}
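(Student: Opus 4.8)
The statement to prove is \Cref{cor:deformation-and-ideals-of-minors-power-series}, which asks to deduce from \Cref{thm:deformation-and-ideals-of-minors} the corresponding statement when $R = R'[[x]]$. The plan is to observe that this is essentially immediate once we check the two hypotheses of \Cref{thm:deformation-and-ideals-of-minors} are met with this choice of $R$ and $x$. First I would set up the identification: let $R = R'[[x]]$, so that $R$ is a noetherian local ring with maximal ideal $\m_R = \m_{R'}R + xR$, and the quotient $R/(x)$ is canonically isomorphic to $R'$. In particular $x \in \m_R$ and $x \notin \m_R^2$ — indeed $x$ is part of a minimal generating set of $\m_R$, since $\m_R/\m_R^2 \cong (\m_{R'}/\m_{R'}^2) \oplus \sk\cdot\bar x$.

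The one genuine point to verify is that $x$ is \emph{super-regular} in $R = R'[[x]]$, i.e.\ that $x^* \in G_{\m_R}(R)$ is a nonzerodivisor. This is where I would spend a sentence or two: the associated graded ring of $R'[[x]]$ with respect to $\m_R = \m_{R'}R + xR$ is isomorphic to the polynomial ring $G_{\m_{R'}}(R')[X]$ over the associated graded ring of $R'$, with $x^*$ corresponding to the variable $X$ (this is a standard computation — the $\m_R$-adic filtration on $R'[[x]]$ restricts compatibly, and $\bigoplus_n \m_R^n/\m_R^{n+1} \cong \bigoplus_n \big(\bigoplus_{i+j=n} \m_{R'}^i/\m_{R'}^{i+1}\cdot X^j\big)$). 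A variable over any ring is a nonzerodivisor, so $x^*$ is regular in $G_{\m_R}(R)$, which is exactly the definition of $x$ being super-regular. I would cite \cite{Sally79superregular} for the terminology, as the excerpt already does.

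With that in hand, the corollary follows by a direct application of \Cref{thm:deformation-and-ideals-of-minors}: take $R$, $x$, $r$, and $M$ as given there, note $R' = R/(x)$, and observe that the two standing hypotheses — existence of $\ell_s$ with $I_{n,s}^{R'}(M) = \m_{R'}^s$ for $n \ge \ell_s$ (for $1 \le s \le r$), and existence of $N$ with $\beta_n^{R'}(M) \ge r$ for $n \ge N$ — are exactly the hypotheses assumed in the corollary. The conclusion $I_{n,r}^R(M) = \m_R^r$ for all $n \ge \max\{\ell_1,\dots,\ell_r,N\}$ is then immediate. Since there is no real obstacle here, the only thing to be careful about is the super-regularity check; everything else is bookkeeping, and I would keep the write-up to a couple of lines, essentially: ``The element $x \in R'[[x]] = R$ is super-regular since $G_{\m_R}(R) \cong G_{\m_{R'}}(R')[x^*]$ is a polynomial extension in which $x^*$ is a variable; now apply \Cref{thm:deformation-and-ideals-of-minors}.''
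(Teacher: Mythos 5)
Your proposal is correct and follows the same route as the paper: the paper also deduces the corollary as an immediate application of \Cref{thm:deformation-and-ideals-of-minors}, after observing that $x$ is super-regular in $R=R'[[x]]$ (and lies in $\m_R\setminus\m_R^2$). Your explicit justification via $G_{\m_R}(R)\cong G_{\m_{R'}}(R')[x^*]$ just spells out the observation the paper leaves as a remark.
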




In the remainder of this section, we obtain generalizations of \Cref{t:FiberProduct} and \Cref{thm:main-theorem-stretched-Gorenstein} to certain quasi-fiber products and stretched Gorenstein rings of arbitrary dimension. Here, we say that a ring $R$ is a \emph{quasi-fiber product} if there exists an $R$-regular sequence $x_1,\ldots,x_t$ in  $R$ such that $R/(x_1,\ldots,x_t)$ is a fiber product ring. We begin with the following lemmas on the structure of quasi-fiber products. 
\begin{lemma}\label{lemma:FP-non-regular}
    Let $(R,\m)$ be a local ring and $x\in \m$ be an $R$-regular element.  If $R$ is non-regular and $R'=R/(x)$ is  a quasi-fiber product ring, then $x\notin\m^2$.
    If instead $R$ is a quasi-fiber product ring and $x_1,\ldots,x_t$ is an $R$-regular sequence such that $R/(x_1,\dots, x_t)$ is an associated fiber product ring, then the following are true:

    \begin{enumerate}[\rm(1)]
        \item If $R$ is not regular, then $x_i\notin\m^2$ for all $i$.

        \item If $R$ is regular, then none of $x_1,\dots,x_t$ are  in $\m^2$ except possibly at most one. 
    \end{enumerate}

\end{lemma}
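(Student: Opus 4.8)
The statement is really two separate claims, so I would split the proof accordingly. First, suppose $R$ is non-regular and $R' = R/(x)$ is a quasi-fiber product; I want to show $x \notin \m^2$. The idea is to argue by contrapositive: if $x \in \m^2$, then $x$ is a minimal relation in the sense that $\m/\m^2 \cong \m_{R'}/\m_{R'}^2$, so $\embdim(R) = \embdim(R')$, and moreover the associated graded rings are related by $G_\m(R)/(x^*)$ being a quotient of $G_{\m_{R'}}(R')$ in a way that forces bad behavior. More cleanly: if $R' = R/(x)$ with $x \in \m^2$ is a quasi-fiber product, then $R'$ has a quotient (by a regular sequence whose images are still minimal generators, since a fiber product has $\m_{R'}^3$-behavior forcing regular elements out of $\m_{R'}^2$ — this is the first claim applied inductively, or rather needs care) that is a nontrivial fiber product $A \times_\sk B$. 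A nontrivial fiber product is never a complete intersection and in fact has $\embdim \geq 2$ with a very specific quadratic structure; the key point is that $x \in \m^2$ being a nonzerodivisor on $R$ would make $R \to R'$ a "small" deformation that cannot produce the fiber product's decomposition $\m_{R'}^2 = \m_A^2 \oplus \m_B^2$ together with $\m_A \m_B = 0$. Concretely, I would use that for a nontrivial fiber product $R'$, one has $\syz_1^{R'}(\sk)$ decomposing, or more elementarily that $R'$ is not a complete intersection (it has too many quadratic relations); if $x \in \m^2$ were regular, lifting a minimal generating set shows $R$ itself would have to be "close" to $R'$, and in particular $\embdim R = \embdim R'$, while the Betti numbers/structure are incompatible unless $R$ is regular. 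I expect the cleanest route is: $x \in \m^2$ regular $\Rightarrow$ $G_\m(R)$ has $x^*$ of positive degree as a nonzerodivisor $\Rightarrow$ $G_{\m_{R'}}(R') = G_\m(R)/(x^*)$ when $x$ is super-regular — but $x$ need not be super-regular a priori, so instead I would use the inequality on Betti numbers or the rigidity of the fiber product's first few Betti numbers.

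**Main structural input and the two sub-parts.** For part (1) of the second statement — $R$ a quasi-fiber product via regular sequence $x_1, \dots, x_t$ with $R/(\ux)$ a fiber product, $R$ not regular — I would induct on $t$, peeling off $x_t$: set $R_1 = R/(x_1, \dots, x_{t-1})$, so $R_1/(x_t)$ is a fiber product; if $R_1$ is non-regular, the first part of the lemma gives $x_t \notin \m_{R_1}^2$, hence (lifting) $x_t \notin \m^2$ — but I need all $x_i \notin \m^2$, so I should instead argue that none of the $R_j := R/(x_1, \dots, x_{j-1}, x_{j+1}, \dots, x_t)$-type intermediate rings is regular. The cleanest induction: if $R$ is non-regular and $R/(x_1,\dots,x_t)$ is a fiber product, then $R/(x_1,\dots,x_{t-1})$ is also non-regular — because if it were regular, then $R/(x_1,\dots,x_t)$ would be regular (quotient of a regular local ring by a regular element in $\m$ is regular only if... no — a quotient of a regular ring by a regular element is a hypersurface, possibly regular), but a nontrivial fiber product is never a hypersurface (its embedding dimension drops or its defining ideal is not principal). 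So $R/(x_1,\dots,x_{t-1})$ is non-regular, apply the first part to get $x_t \notin \m^2$, and by symmetry (the regular sequence can be reordered up to the usual caveats, and $\m^2$-membership is symmetric) each $x_i \notin \m^2$. For part (2), $R$ regular: then $R$ is a power series ring, $R/(x_1,\dots,x_t)$ is a fiber product which is non-regular, so the ideal $(x_1,\dots,x_t)$ is not contained in $\m^2$ — at least one $x_i \notin \m^2$; I claim exactly the statement allows at most one to be outside... wait, it says \emph{none are in $\m^2$ except possibly at most one}, i.e.\ at most one is \emph{in} $\m^2$. So I must show at least $t-1$ of them avoid $\m^2$. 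After a change of coordinates, $R/(x_i : x_i \notin \m^2)$ is still regular; if two of the $x_i$ lay in $\m^2$, then after killing the linear ones we'd have a regular local ring modulo two elements of its square forming a regular sequence, and the further quotient would be a fiber product — but a fiber product $S \times_\sk T$ has $\embdim \geq 2$ and its defining ideal in a regular ring of dimension $\embdim$ is minimally generated by $\mu(\p^2) + \mu(\q^2) + \embdim(S)\embdim(T)$-ish many quadrics plus... in any case it has codimension $0$ (it's a quotient by an ideal generated by quadrics and the ring stays of the same embedding dimension), so it cannot be $\text{(regular)}/(\text{regular sequence of length} \geq 2 \text{ in } \m^2)$, which would have embedding dimension equal to the original and be a complete intersection of positive codimension — contradicting that a nontrivial fiber product is not a complete intersection.

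**The main obstacle.** The hard part will be pinning down the precise rigidity of fiber products that forbids them from arising as $(\text{regular local ring})/(\text{part of a regular system of parameters lying in }\m^2)$ — i.e., I need a clean statement like "a nontrivial fiber product is never a complete intersection" (true, and standard: e.g.\ from its Poincaré series $\frac{1}{P_S^{-1} + P_T^{-1} - 1}$ being irrational or its Tor-algebra being non-Koszul/non-ci) and "killing a regular element of $\m^2$ preserves embedding dimension but raises codimension by one." Combining these, $R/(x_1,\dots,x_t)$ a nontrivial fiber product forces at most one $x_i \in \m^2$ (in the regular case) and forces all $x_i \notin \m^2$ (in the non-regular case, where additionally we use that quotienting a non-regular ring by a single element of $\m^2$ cannot make it regular, so the non-regularity propagates down the chain). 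I would cite the non-complete-intersection-ness of nontrivial fiber products from the standard references on fiber products (e.g.\ the Poincaré series formula, or \cite{moore}, \cite{dress/kramer}) rather than reprove it. The bookkeeping for reordering the regular sequence and the induction on $t$ is routine but needs to be stated carefully; I would phrase the induction as "at least $t-1$ of the $x_i$ lie outside $\m^2$" and handle the base case $t=1$ by the first assertion of the lemma.
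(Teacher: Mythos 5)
The crux of this lemma is its first assertion ($R$ non-regular, $x$ an $R$-regular element, $R/(x)$ a quasi-fiber product $\Rightarrow x\notin\m^2$), to which both your induction for (1) and the paper's argument reduce, and your proposal never actually proves it: you gesture at associated graded rings (while conceding $x$ need not be super-regular), at Betti-number growth, and at a ``rigidity'' of fiber products, but none of these is carried out, and the rigidity you lean on cannot do the job. The statements ``a nontrivial fiber product is never a complete intersection'' and ``never a hypersurface'' are false as stated: $\sk[[x]]\times_{\mathsf k}\sk[[y]]\cong \sk[[x,y]]/(xy)$ is a hypersurface fiber product. The correct version (a fiber product that is a complete intersection must be a hypersurface, cf.\ \cite[Corollary 2.7]{bog}, which the paper uses in \Cref{quasiFib-edimlem}) only yields a contradiction when the ambient ring is regular, i.e.\ it only bears on part (2); in the first assertion $R$ is an arbitrary non-regular local ring, so no complete-intersection or codimension count is available. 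The paper's proof at exactly this point invokes genuinely nontrivial external input: quasi-fiber product rings are Tor-friendly \cite[Corollary 6.5]{quasidec}, and by \cite[Proposition 2.8(2)]{persistence} a Tor-friendly ring cannot be written (after completion) as $Q/(x)$ with $Q$ non-regular and $x$ a $Q$-regular element in the square of the maximal ideal. Without this (or an equivalent substitute) your proof of the first claim, and hence of part (1), does not go through.

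The surrounding reductions are close to the paper's and are mostly fine, with fixable slips. Your step ``$R/(x_1,\dots,x_{t-1})$ is non-regular because a fiber product is never a hypersurface'' is both false and unnecessary: if a quotient of a local ring by a regular element is regular then the ring itself is regular (compare embedding dimension and dimension), so non-regularity descends along the whole chain; reordering the sequence is then legitimate in a noetherian local ring, matching the paper's routing through $R/(x_i)$. In part (2), ``kill the $x_i\notin\m^2$ first; the quotient is regular'' is not automatic, since the image of a later $x_j\notin\m^2$ can land in the square of the intermediate quotient; the clean fix is a codimension count: if two of the $x_i$ lie in $\m^2$, then $\embdim(R/(\ux))-\dim(R/(\ux))\ge 2$ while $R/(\ux)$ is a complete intersection, contradicting the hypersurface fact above. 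The paper instead simply passes to $S=R/(x_1)$ (non-regular since $x_1\in\m^2$) and reapplies the first assertion to each $x_i$, $i\ge 2$, which is the argument you should emulate once that first assertion is secured.
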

\begin{proof}
    By \cite[Corollary 6.5]{quasidec}, $R'$ is Tor-friendly in the terminology of \cite{persistence}. Passing to the completion, we see that $\widehat{R'}\cong \widehat R/x\widehat R$, the image of $x$ in $\widehat R$ is regular, and $\widehat R$ is not regular.  We then get that $x\widehat R\nsubseteq \m^2\widehat R$ by \cite[Proposition 2.8(2)]{persistence}. Thus, $x\notin \mathfrak m^2$. 

    Now for the last claim, (1) follows from the above since for each $i$, $R_i \coloneqq R/(x_i)$ is a quasi-fiber product ring.
    For (2), without loss of generality, we may assume $x_1\in \m^2$. Then, $S \coloneqq R/(x_1)$ is not regular, and we can repeatedly apply the above (with $R'=R_i \coloneqq S/(x_i)$ for each $i\ge 2$) to finish the claim.
\end{proof}

\begin{lemma}\label{quasiFib-edimlem} Let $R$ be a quasi-fiber product ring which is not Golod. Let $\underline{x}=x_1,\ldots, x_t$ be an $R$-regular sequence such that $R'=R/( \underline{x})$ is an associated fiber product ring. Then, $\edim(R')\ge 3$.
\end{lemma}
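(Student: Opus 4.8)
The plan is to reduce the embedding-dimension statement to a statement about the associated fiber product $R'$, using the fact that a regular sequence inside $\m \setminus \m^2$ drops the embedding dimension by exactly one at each step. Concretely, write $R' = R/(\underline x)$ with $\underline x = x_1,\dots,x_t$ an $R$-regular sequence. I claim that $\edim(R) \geq \edim(R')$ always, and that in fact one can choose the regular sequence so that $\edim(R) = \edim(R') + (\text{number of } x_i \notin \m^2)$; regardless, the key point I need is just $\edim(R') \leq \edim(R)$, so it suffices to prove $\edim(R') \geq 3$, i.e., to prove the statement when $t = 0$ and $R = R'$ is itself a (non-Golod) fiber product. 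Here I would invoke \Cref{lemma:FP-non-regular}: since $R$ is a quasi-fiber product over $R'$ and $R$ is not Golod — hence not regular (regular rings are Golod) and, more importantly, not... — actually the cleanest route is to observe that fiber products of embedding dimension $\leq 2$ are forced into a degenerate class.

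The heart of the argument is the reduction to fiber products themselves. So assume $R = S \times_{\sk} T$ is a non-trivial fiber product with $\edim(S) \geq 1$ and $\edim(T) \geq 1$, whence $\edim(R) = \edim(S) + \edim(T) \geq 2$. I must rule out $\edim(R) = 2$, which happens precisely when $\edim(S) = \edim(T) = 1$. In that case both $S$ and $T$ have principal maximal ideals, so each is a (possibly trivial) hypersurface; a non-trivial fiber product $R = S \times_\sk T$ of two such rings is well known to be Golod — indeed, fiber products are always Golod over one of their factors, and more sharply, Lescot's and Dress–Krämer's results show that a fiber product $S \times_\sk T$ with both factors of embedding dimension $1$ is Golod (one can also just note $R \cong \sk[[x,y]]/(xy, f(x), g(y))$ for suitable $f, g$, and such rings are Golod by, e.g., \cite[5.1]{Avramov1998} since they are fiber products of hypersurfaces over $\sk$, or by a direct Poincaré-series computation). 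This contradicts the hypothesis that $R'$, hence $R$, is not Golod. Therefore $\edim(R) \geq 3$ for a non-Golod fiber product.

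It remains to transfer this back up the regular sequence. For each $i$, pass from $R/(x_1,\dots,x_{i-1})$ to $R/(x_1,\dots,x_i)$ by killing a nonzerodivisor; killing any element of $\m$ can only decrease (or keep) the embedding dimension, and it stays the same exactly when the element lies in $\m^2$, dropping by one otherwise. In particular $\edim(R/(\underline x)) \leq \edim(R)$ is automatic, which is all that is needed — but I should record it in the direction we want: $\edim(R) \geq \edim(R') = \edim(\text{fiber product}) \geq 3$. Wait — I want $\edim(R') \geq 3$, and $R'$ \emph{is} the fiber product, so the previous paragraph already gives it directly, provided I confirm $R'$ is not Golod: this follows because $R$ is not Golod and Golodness is not destroyed by passing to a quotient by a regular sequence \emph{of length zero}; here $R'$ is obtained from the non-Golod ring $R$, but Golodness does not in general ascend or descend along regular sequences, so instead I should use \Cref{lemma:FP-non-regular} to see that the $x_i$ are (mostly) outside $\m^2$ and argue that $R$ Golod $\iff R'$ Golod when the $x_i$ form a regular sequence in $\m \setminus \m^2$ — this is a standard fact (a local ring is Golod iff its quotient by a regular sequence of linear forms is Golod, by \cite{Avramov1998}, since such a quotient is a Golod map / the Koszul homology behaves accordingly). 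So: $R$ not Golod $\Rightarrow R'$ not Golod $\Rightarrow \edim(R') \geq 3$ by the fiber-product analysis.

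The main obstacle I anticipate is pinning down precisely which "fiber products of small embedding dimension are Golod" statement to cite, and making sure the Golod transfer along the regular sequence $\underline x$ is clean — in particular handling the exceptional case in \Cref{lemma:FP-non-regular}(2) where one $x_i$ may lie in $\m^2$ (when $R$ is regular). But if $R$ is regular it is Golod, contradicting the hypothesis, so that exceptional case does not actually arise here and every $x_i \notin \m^2$; thus the Golod-transfer is the textbook one for regular sequences of linear forms, and the argument closes.
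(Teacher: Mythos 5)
Your argument is correct, and for the first two steps it coincides with the paper's proof: you invoke \Cref{lemma:FP-non-regular} (handling its exceptional case exactly as the paper does, via ``regular $\Rightarrow$ Golod'' contradicting the hypothesis) to conclude $x_1,\dots,x_t\in\m\setminus\m^2$, and then descend non-Golodness from $R$ to $R'$ along this regular sequence of linear forms, which is the paper's appeal to \cite[Proposition 5.2.4]{Avramov1998}. You diverge only in the final step. The paper argues abstractly: $R'$ cannot be a complete intersection because a (nontrivial) fiber product that is a complete intersection must be a hypersurface \cite[Corollary 2.7]{bog}, hence Golod; since $R'$ is neither Golod nor a complete intersection, Scheja's dichotomy \cite[5.1, Proposition 5.3.4]{Avramov1998} forces $\edim(R')\ge 3$. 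You instead rule out $\edim(R')=2$ concretely: both factors would have embedding dimension one, and a nontrivial fiber product of two such rings, $\sk[[x,y]]/(xy,f(x),g(y))$, is Golod, contradicting the non-Golodness of $R'$. That claim is true, but your citation for it is the weak point: \cite[5.1]{Avramov1998} contains no statement that fiber products of hypersurfaces are Golod; the clean references are either Lescot's theorem that $S\times_{\sk}T$ is Golod precisely when $S$ and $T$ are, or simply the same Scheja dichotomy the paper uses, since $\sk[[x,y]]/(xy,f,g)$ is either the hypersurface $\sk[[x,y]]/(xy)$ or not a complete intersection, hence Golod in embedding dimension two. With the citation repaired (or replaced by your proposed direct Poincar\'e-series check), your route is a valid, slightly more hands-on alternative to the paper's ``not CI $+$ Scheja'' ending; the paper's version has the mild advantage of not needing any classification of small fiber products beyond the quoted result on complete-intersection fiber products.
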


\begin{proof} By \Cref{lemma:FP-non-regular}, we see that $x_1, \ldots, x_t \in \m_R \setminus\m_R^2$ (note that $R$ is not regular since regular rings are Golod). 
Since $R$ is not Golod, the same holds for $R'$ by \cite[Proposition 5.2.4]{Avramov1998}.
$R'$ also cannot be a complete intersection, since this would imply that it is a hypersurface (see \cite[Corollary 2.7]{bog}), and hypersurface rings are Golod (see \cite[Proposition 5.2.5]{Avramov1998}).
Therefore $\embdim(R')\geq 3$  by \cite[5.1, Proposition 5.3.4]{Avramov1998}.
\end{proof}

\Cref{thm:deformation-and-ideals-of-minors} allows us to extend \Cref{t:FiberProduct} to a certain class of modules over certain quasi-fiber products, as given in the corollary below.

\begin{corollary}\label{cor:QFP}
Let $R$ be a  ring which is not Golod. Assume there exists a super-regular sequence $\underline{x}=x_1,\ldots, x_t$ in $R$  such that $R'=R/( \underline{x})$ is a fiber product ring.
If $M$ is a finitely generated $R'$-module of infinite projective dimension, then $I_{n,r}^R(M)=\m_R^r$ for all $n \gg0$.
\end{corollary}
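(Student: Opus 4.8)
The plan is to reduce the statement to the fiber product case handled by \Cref{t:FiberProduct} together with the deformation machinery of \Cref{thm:deformation-and-ideals-of-minors}, by inducting on the length $t$ of the super-regular sequence $\underline{x}=x_1,\dots,x_t$. First I would observe that since $R$ is not Golod, the quotient $R'=R/(\underline{x})$ is a fiber product with $\embdim(R')\geq 3$ by \Cref{quasiFib-edimlem}, so \Cref{t:FiberProduct} applies to $R'$: for every finitely generated $R'$-module $M$ of infinite projective dimension and every $s\geq 1$, we have $I_{n,s}^{R'}(M)=\m_{R'}^s$ for all $n\geq \frac{2s}{\embdim(S)\embdim(T)}+8$, where $R'=S\times_{\mathsf k}T$. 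In particular, for each $1\leq s\leq r$ there is $\ell_s$ with $I_{n,s}^{R'}(M)=\m_{R'}^s$ for $n\geq \ell_s$. By \Cref{rem:betti-nos-over-FP}, there also exists $N$ with $\beta_n^{R'}(M)\geq r$ for all $n\geq N$, since $M$ has infinite projective dimension over $R'$.

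The inductive step is to peel off one element of the super-regular sequence at a time. Set $R_i=R/(x_1,\dots,x_i)$ for $0\leq i\leq t$, so $R_0=R$ and $R_t=R'$. The image $\bar{x}_i$ of $x_i$ in $R_{i-1}$ is super-regular in $R_{i-1}$ (a standard fact: the associated graded of $R_{i-1}$ is the quotient of $\gr_{\m}(R)$ by the regular sequence of initial forms $x_1^*,\dots,x_{i-1}^*$, and $x_i^*$ remains regular on it; this is exactly the content of $\underline{x}$ being a super-regular sequence). Since $x_i\notin \m^2$ and $R$ is not regular, \Cref{lemma:FP-non-regular} gives that each $\bar{x}_i\in \m_{R_{i-1}}\setminus\m_{R_{i-1}}^2$. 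Now suppose by downward induction on $i$ that for each $1\leq s\leq r$ there is $\ell_s^{(i)}$ with $I_{n,s}^{R_i}(M)=\m_{R_i}^s$ for $n\geq \ell_s^{(i)}$, and that $\beta_n^{R_i}(M)\geq r$ for $n\gg 0$ (the base case $i=t$ being established above). Applying \Cref{thm:deformation-and-ideals-of-minors} to the ring $R_{i-1}$, the element $\bar{x}_i$, the quotient $R_i=R_{i-1}/(\bar x_i)$, and the $R_i$-module $M$, we conclude that $I_{n,r}^{R_{i-1}}(M)=\m_{R_{i-1}}^r$ for $n\gg 0$, hence in particular $I_{n,s}^{R_{i-1}}(M)=\m_{R_{i-1}}^s$ for $n\gg0$ for all $s\leq r$ (applying the theorem with $r$ replaced by each $s$). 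Finally, since the Betti numbers of $M$ over $R_{i-1}$ agree with those over $R_i$ (by \cite[Theorem 2.2.3]{Avramov1998}, as the converse Eisenbud--Shamash construction shows $\beta_n^{R_{i-1}}(M)=\beta_n^{R_i}(M)+\beta_{n-1}^{R_i}(M)\geq \beta_n^{R_i}(M)$), the hypothesis $\beta_n^{R_{i-1}}(M)\geq r$ for $n\gg 0$ is preserved, completing the inductive step. After $t$ steps we arrive at $R_0=R$ and obtain $I_{n,r}^R(M)=\m_R^r$ for $n\gg 0$.

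The main obstacle, and the point requiring the most care, is verifying that super-regularity propagates correctly down the tower $R=R_0\to R_1\to\cdots\to R_t=R'$: one must check both that $\bar x_i$ is super-regular in $R_{i-1}$ (so that \Cref{thm:deformation-and-ideals-of-minors} applies) and that $\bar x_i\notin \m_{R_{i-1}}^2$ (so that the converse Eisenbud--Shamash construction preserves minimality). The first is precisely the meaning of $x_1,\dots,x_t$ being a super-regular sequence — the initial forms $x_1^*,\dots,x_t^*$ form a regular sequence in $\gr_{\m}(R)$, and quotienting by $x_1^*,\dots,x_{i-1}^*$ identifies $\gr_{\m_{R_{i-1}}}(R_{i-1})$ with $\gr_{\m}(R)/(x_1^*,\dots,x_{i-1}^*)$, on which $x_i^*$ is a nonzerodivisor. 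The second follows from \Cref{lemma:FP-non-regular} applied at each stage, using that $R_{i-1}$ is again a quasi-fiber product ring that is not regular (it is not regular because $R$ is not Golod, hence not regular, and quotients by regular sequences of a non-regular ring stay non-regular — or more directly because $R_i$ is a non-regular fiber product and $R_{i-1}$ maps onto it). One should also note at the outset that the projective dimension of $M$ is infinite over every $R_i$, since finite projective dimension is preserved and reflected along quotients by regular elements, so \Cref{rem:betti-nos-over-FP} is legitimately available at the bottom of the tower.
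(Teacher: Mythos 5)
Your proposal is correct and follows the paper's own argument exactly: establish $\embdim(R')\geq 3$ via \Cref{quasiFib-edimlem}, apply \Cref{t:FiberProduct} and \Cref{rem:betti-nos-over-FP} over $R'$, and then apply \Cref{thm:deformation-and-ideals-of-minors} inductively up the tower $R=R_0\to\cdots\to R_t=R'$. The paper compresses the induction into one sentence, whereas you spell out the needed bookkeeping (super-regularity and the $\notin\m^2$ condition for each $\bar x_i$, the transfer of the Betti bound via the converse Eisenbud--Shamash construction, and applying the theorem for every $s\leq r$), all of which is accurate.
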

\begin{proof}
By \Cref{quasiFib-edimlem} we have $\embdim(R')\geq 3$, so \Cref{t:FiberProduct} implies that $I_{n,r}^{R'}(M)=\m_{R'}^r$ for all $n\gg 0$. Furthermore, from \Cref{rem:betti-nos-over-FP}, we see that for $n\gg0$, we have $\beta^{R'}_n(M)\geq  r$. Thus, using \Cref{thm:deformation-and-ideals-of-minors} inductively, the result follows.  
\end{proof}

Note that $R=R'[[x_1,\ldots,x_t]]$, where $R'$ is a fiber product of embedding dimension at least $3$ is a special case of the above corollary.

When $(R,\m)$ is Cohen--Macaulay of minimal multiplicity, any quotient of $R$ by an ideal generated by a minimal reduction of $\m$ is a fiber product (c.f.~\cite[Example 4.7]{quasidec}). Thus, when a minimal reduction of $\m$ exists for such rings (e.g. when $R/\mathfrak m$ is infinite), the ring $R$ is a quasi-fiber product. Moreover, in this case, since $G_{\mathfrak m}(R)$ is Cohen--Macaulay, every minimal reduction is a super-regular sequence (see \cite[Theorems 1 and 2]{Sa77}). Thus, arguing as in the above corollary, we get the following consequence.  

\begin{corollary}
    Let $(R,\m)$ be a Cohen--Macaulay ring of minimal multiplicity with infinite residue field and $\embdim(R)-\dim(R)\geq 3$. Suppose  $\underline{x}=x_1, \ldots, x_d$ is a minimal reduction of $\m$ and $R'= R/ (\underline{x})$. Then for every non-free $R'$-module $M$, we have $I_{n,r}^R(M)=\m_R^r$ for all $n \gg0$.
\end{corollary}

Similar to the case of quasi-fiber products above, \Cref{thm:main-theorem-stretched-Gorenstein} can be extended to a certain class of modules over stretched Gorenstein rings of arbitrary dimension. 

\begin{corollary}\label{cor:SGR}
    Let $(R, \m, \mathsf k)$ be a local ring of dimension $d$ which is neither Golod nor a complete intersection. Suppose $\ch(\mathsf k)\neq 2$. Let $\underline{x}=x_1,\ldots, x_d$ be a minimal reduction of $\m$ such that $R'=R/( \underline{x})$ is an artinian stretched Gorenstein ring, and $\underline{x}$ be a super-regular sequence in $R$. Then given any finitely generated non-free $R'$-module $M$, we have $I_{n,r}^R(M)=\m_R^r$ for all $n \gg 0$.
\end{corollary}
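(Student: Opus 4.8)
The plan is to reduce this statement to a direct application of \Cref{thm:main-theorem-stretched-Gorenstein} together with \Cref{thm:deformation-and-ideals-of-minors}, peeling off the regular sequence $\underline{x}=x_1,\dots,x_d$ one element at a time. First I would observe that $R'=R/(\underline{x})$ is an artinian stretched Gorenstein ring, and since $R$ is neither Golod nor a complete intersection, neither is $R'$: Golodness descends modulo a regular sequence by \cite[Proposition 5.2.4]{Avramov1998} (applied in reverse, as in the proof of \Cref{quasiFib-edimlem}), and similarly a complete intersection property would lift. Hence $R'$ is a non-complete-intersection artinian stretched Gorenstein ring, so by the discussion following \Cref{thm:structure-of-stretched-Gorenstein-rings} we have $\embdim(R')\geq 3$, and \Cref{thm:main-theorem-stretched-Gorenstein} gives that for the fixed non-free module $M$ and each $1\leq s\leq r$, there is $\ell_s$ with $I_{n,s}^{R'}(M)=\m_{R'}^s$ for $n\geq \ell_s$.

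Next I would set up the inductive ladder. Write $R_0=R'$ and, more generally, let $R_i = R/(x_{i+1},\dots,x_d)$ so that $R_d = R$ and $R_{i-1} = R_i/(x_i R_i)$. The hypothesis that $\underline{x}$ is a super-regular sequence in $R$ means $x_1^*,\dots,x_d^*$ is a regular sequence in $G_{\m}(R)$; I would need that the image of $x_i$ in $R_i$ is super-regular in $R_i$ and, crucially, lies outside $\m_{R_i}^2$. The latter is exactly the content one needs to apply \Cref{thm:deformation-and-ideals-of-minors}, whose hypothesis is $x\in\m_R\setminus\m_R^2$ super-regular. Standard properties of associated graded rings under a regular sequence of initial forms (essentially $G_{\m_{R_i}}(R_i)\cong G_{\m}(R)/(x_{i+1}^*,\dots,x_d^*)$ with $x_i^*$ still regular there, and $x_i^*$ having positive degree one) give both facts; I would cite \cite{Sally79superregular} or argue directly that a super-regular sequence in $R$ remains super-regular, and in particular degree-one, after quotienting by an initial segment.

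Then the induction runs as follows. For the base case, $M$ is a finitely generated non-free $R_0 = R'$-module, and \Cref{thm:main-theorem-stretched-Gorenstein} tells us $I_{n,s}^{R_0}(M)=\m_{R_0}^s$ for $n\gg 0$ and all $s\leq r$; moreover, by \Cref{rem:betti-no-over-artinian-stretched} the Betti numbers $\beta_n^{R_0}(M)$ grow without bound, so in particular $\beta_n^{R_0}(M)\geq r$ for $n\gg 0$. For the inductive step, suppose we know that $I_{n,s}^{R_{i-1}}(M)=\m_{R_{i-1}}^s$ for $n\gg 0$ and every $1\leq s\leq r$, and that $\beta_n^{R_{i-1}}(M)\geq r$ for $n\gg 0$ (the latter condition is automatic: since $R_{i-1}$ is not a hypersurface — it is not regular and is neither Golod nor a complete intersection after this point, or more simply $\embdim(R_{i-1})\geq 3$ — the free resolution has no upper bound on $\beta_n$; alternatively this is preserved because $\beta_n^{R_i}(M) = \beta_n^{R_{i-1}}(M)+\beta_{n-1}^{R_{i-1}}(M)$ by \Cref{InverseShamash} and the minimality lemma). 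Applying \Cref{thm:deformation-and-ideals-of-minors} with the ring $R_i$, the element $x_i\in\m_{R_i}\setminus\m_{R_i}^2$ super-regular, and the $R_{i-1} = R_i/(x_i)$-module $M$, we conclude $I_{n,r}^{R_i}(M)=\m_{R_i}^r$ for $n\gg 0$; and to feed the next step we also need $I_{n,s}^{R_i}(M)=\m_{R_i}^s$ for all $s\leq r$, which is just the same argument applied with $r$ replaced by each $s$. After $d$ steps we reach $R_d=R$ and obtain $I_{n,r}^R(M)=\m_R^r$ for $n\gg 0$, as desired.

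The main obstacle I anticipate is purely bookkeeping rather than conceptual: verifying that the super-regularity and the ``not in $\m^2$'' property genuinely propagate down the tower $R=R_d\to R_{d-1}\to\cdots\to R_0$, i.e.\ that $x_i$ remains a degree-one super-regular element of $R_i$. This is where one must be careful that ``super-regular sequence in $R$'' was the right hypothesis and that it behaves well under successive quotients; the cleanest route is the isomorphism $G_{\m}(R)/(x_{i+1}^*,\dots,x_d^*)\cong G_{\m_{R_i}}(R_i)$ valid precisely because the initial forms form a regular sequence, together with the observation that a regular element of positive degree in a graded ring, once it has degree one, forces the corresponding ring element out of the square of the maximal ideal. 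I would also need the minor point that \Cref{introthm:deformation}'s hypothesis on Betti numbers ($\beta_n^{R'}(M)\geq r$ for $n\gg 0$) is satisfied at each stage, which follows from \Cref{rem:betti-no-over-artinian-stretched} at the base and from the additivity $\beta_n^{R_i}(M)=\beta_n^{R_{i-1}}(M)+\beta_{n-1}^{R_{i-1}}(M)$ thereafter.
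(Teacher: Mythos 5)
Your proposal follows essentially the same route as the paper's (very terse) proof: deduce $\embdim(R')\geq 3$, invoke \Cref{thm:main-theorem-stretched-Gorenstein} and \Cref{rem:betti-no-over-artinian-stretched} for the base case, and then climb the tower $R'=R_0,\dots,R_d=R$ by applying \Cref{thm:deformation-and-ideals-of-minors} one super-regular element at a time; you in fact spell out the bookkeeping (propagation of the hypotheses of \Cref{thm:deformation-and-ideals-of-minors}, including the Betti bound via $\beta_n^{R_i}(M)=\beta_n^{R_{i-1}}(M)+\beta_{n-1}^{R_{i-1}}(M)$) that the paper compresses into ``inductively.''

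One justification in your plan needs repair. You try to get $x_i\in\m_{R_i}\setminus\m_{R_i}^2$ from super-regularity, saying that $x_i^*$ ``has positive degree one''; but super-regularity alone does not force degree one (e.g.\ $x^2$ is super-regular in $\sk[[x,y]]$ and lies in $\m^2$), so as phrased the argument is circular. The correct source is the hypothesis that $\underline{x}$ is a \emph{minimal reduction} of $\m$: by Northcott--Rees, generators of a minimal reduction of $\m$ form part of a minimal generating set of $\m$, so the classes of $x_1,\dots,x_d$ are linearly independent in $\m/\m^2$, whence the image of $x_i$ in $R_i$ lies outside $\m_{R_i}^2$; combined with the isomorphism $G_{\m_{R_i}}(R_i)\cong G_{\m}(R)/(x_{i+1}^*,\dots,x_d^*)$ (permutability of homogeneous regular sequences of positive degree), its initial form is the image of $x_i^*$ and remains regular, which is exactly what \Cref{thm:deformation-and-ideals-of-minors} needs at each stage. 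Also, your parenthetical claim that $R_{i-1}$ not being a hypersurface forces unbounded Betti numbers is not valid in general (modules with bounded Betti numbers exist over non-hypersurfaces); discard it and keep the alternative you already give, namely the additivity of Betti numbers coming from \Cref{InverseShamash} and its minimality lemma (or, more simply, $\m_{R_{i-1}}^r\neq 0$ together with $I^{R_{i-1}}_{n,r}(M)=\m_{R_{i-1}}^r$ as in \Cref{thm:deformation-and-ideals-of-minors-mR-non-zero}). With these two adjustments your argument is complete and agrees with the paper's.
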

\begin{proof}
An argument similar to the proof of \Cref{quasiFib-edimlem} shows $\edim(R')\ge 3$, so \Cref{thm:main-theorem-stretched-Gorenstein} implies that $I_{n,r}^{R'}(M)=\m_{R'}^r$ for all $n\gg 0$. By \Cref{rem:betti-no-over-artinian-stretched}, we see that for $n \gg 0$, we have $\beta^{R'}_n(M)\geq r$. Using \Cref{thm:deformation-and-ideals-of-minors} inductively, the result follows. 
\end{proof}

 As a special case of the above corollary, we have the following.
\begin{corollary}
 Let $(R, \m, \mathsf k)$ be a local ring which is neither Golod nor a complete intersection, $\ch(\mathsf k)\neq 2$,  $\mathsf k$ is infinite, $R$ is stretched Gorenstein and  $G_{\mathfrak m}(R)$ is Cohen--Macaulay. Then, for any minimal reduction $\underline{x}=x_1,\ldots, x_d$  of $\m$ such that $R'=R/( \underline{x})$ is an artinian stretched Gorenstein ring, if $M $ is a  finitely generated non-free $R'$-module, then we have $I_{n,r}^R(M)=\m_R^r$ for all $n \gg 0$.
 \end{corollary}

\end{document}